\documentclass[11pt]{article}
\usepackage[english]{babel}
\usepackage{amssymb,amsmath,amsthm}
\textwidth=178truemm \textheight=250truemm \voffset-2.5truecm
\hoffset-2.5truecm
\parindent=16pt
\hfuzz10pt

\newtheorem{theorem}{Theorem}[section]
\newtheorem{lemma}[theorem]{Lemma}

\newtheorem{question}[theorem]{Problem}

{\theoremstyle{definition}\newtheorem{definition}[theorem]{Definition}}
{\theoremstyle{definition}}

\numberwithin{equation}{section}

\newtheorem*{thmg}{Theorem G}

\def\C{{\mathbb C}}
\def\D{{\mathbb D}}
\def\N{{\mathbb N}}
\def\Z{{\mathbb Z}}
\def\R{{\mathbb R}}
\def\K{{\mathbb K}}

\def\pd{p_{{}_{\scriptstyle D}}}
\def\epsilon{\varepsilon}
\def\phi{\varphi}
\def\leq{\leqslant}
\def\geq{\geqslant}
\def\ker{{\tt ker}\,}
\def\spann{\hbox{\tt span}\,}

\def\det{{\tt det}\,}

\title{Hypercyclic operators on countably dimensional spaces}

\author{Andre Schenke and Stanislav Shkarin}

\date{}

\begin{document}

\maketitle

\begin{abstract}According to Grivaux, the group $GL(X)$ of
invertible linear operators on a separable infinite dimensional
Banach space $X$ acts transitively on the set $\Sigma(X)$ of
countable dense linearly independent subsets of $X$. As a
consequence, each $A\in \Sigma(X)$ is an orbit of a hypercyclic
operator on $X$. Furthermore, every countably dimensional normed
space supports a hypercyclic operator.

We show that for a separable infinite dimensional Fr\'echet space
$X$, $GL(X)$ acts transitively on $\Sigma(X)$ if and only if $X$
possesses a continuous norm. We also prove that every countably
dimensional metrizable locally convex space supports a hypercyclic
operator.
\end{abstract}
\small \noindent{\bf MSC:} \ \ 47A16

\noindent{\bf Keywords:} \ \ Cyclic operators; hypercyclic
operators; invariant subspaces; topological vector spaces
\normalsize

\section{Introduction \label{s1}}\rm

All vector spaces in this article are over the field $\K$, being
either the field $\C$ of complex numbers or the field $\R$ of real
numbers. As usual, $\N$ is the set of positive integers and
$\Z_+=\N\cup\{0\}$. Throughout the article, all topological spaces
{\it are assumed to be Hausdorff}. For a topological vector space
$X$, $L(X)$ is the algebra of continuous linear operators on $X$,
$X'$ is the space of continuous linear functionals on $X$ and
$GL(X)$ is the group of $T\in L(X)$ such that $T$ is invertible and
$T^{-1}\in L(X)$. By saying 'countable', we always mean 'infinite
countable'. Recall that a {\it Fr\'echet space} is a complete
metrizable locally convex space. Recall also that the topology
$\tau$ of a topological vector space $X$ is called {\it weak} if
$\tau$ is exactly the weakest topology making each $f\in Y$
continuous for some linear space $Y$ of linear functionals on $X$
separating points of $X$. It is well-known and easy to see that a
topology of a metrizable infinite dimensional topological vector
space $X$ is weak if and only if $X$ is isomorphic to a dense linear
subspace of $\omega=\K^\N$.

Recall that $x\in X$ is called a {\it hypercyclic vector} for $T\in
L(X)$ if the orbit
$$
O(T,x)=\{T^nx:n\in\Z_+\}
$$
is dense in $X$ and $T$ is called {\it hypercyclic} if it has a
hypercyclic vector. It is easy to see that an orbit of a hypercyclic
vector is always dense countable and linearly independent. For a
topological vector space $X$, we denote the set of all countable
dense linearly independent subsets of $X$ by the symbol $\Sigma(X)$.
Thus
\begin{equation*}
O(T,x)\in \Sigma(X)\ \ \text{if $x$ is a hypercyclic vector for
$T$.}
\end{equation*}
For more information on hypercyclicity see books
\cite{bama-book,book2} and references therein. For the sake of
brevity, we shall say that a subset $A$ of a topological vector
space $X$ is an {\it orbit} if there are $T\in L(X)$ and $x\in X$
such that $A=O(T,x)$.

The starting point for this article is the theorem by Grivaux
\cite{gri2} stating that every countable dense linearly independent
subset of a separable infinite dimensional Banach space is an orbit
of a hypercyclic operator. This result easily follows from another
theorem in \cite{gri2}:

\begin{thmg} For every separable infinite dimensional Banach space
$X$, $GL(X)$ acts transitively on $\Sigma(X)$.
\end{thmg}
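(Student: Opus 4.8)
The plan is to realize the required $T\in GL(X)$ as a norm-convergent infinite product $T=\lim_{k\to\infty}S_k\circ S_{k-1}\circ\cdots\circ S_1$, where each $S_k\in GL(X)$ is a small perturbation of the identity, built by a back-and-forth construction over fixed enumerations $A=\{a_n:n\in\N\}$ and $B=\{b_n:n\in\N\}$. At stage $k$ I will have committed to a finite partial injection $\sigma$ from indices of $A$ to indices of $B$, with domain $P_k$ and range $Q_k$, and I will maintain the two invariants that $T_k:=S_k\cdots S_1$ satisfies $T_ka_j=b_{\sigma(j)}$ for all $j\in P_k$, and that every future factor $S_{k+1},S_{k+2},\dots$ will fix the finite-dimensional space $E_k:=\spann\{b_q:q\in Q_k\}$ pointwise. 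The second invariant guarantees that for $j\in P_k$ the vector $T_Na_j$ is already frozen at $b_{\sigma(j)}$ for all $N\ge k$, so once the limit $T$ exists it satisfies $Ta_j=b_{\sigma(j)}$ for every $j$; if the back-and-forth makes $\sigma$ a bijection of $\N$, this gives $T(A)=B$.

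The analytic heart is a perturbation lemma: given a finite-dimensional $E\subseteq X$, a vector $x\notin E$ and a vector $y\notin E$, there is $S\in GL(X)$ with $S|_E=\mathrm{id}_E$, $Sx=y$ and $\max\{\|S-I\|,\|S^{-1}-I\|\}\le C(E,x)\,\|x-y\|$ for $\|x-y\|$ small. I would prove it by choosing a bounded projection giving $X=E\oplus F$ (possible since $\dim E<\infty$), writing $x=e+f$, $y=e'+f'$ with $f,f'\in F\setminus\{0\}$, fixing $g\in F^{*}$ with $g(f)=\|f\|$, and letting $S$ be the identity on $E$ and $v\mapsto v+\tfrac{g(v)}{g(f)}\bigl((e'-e)+(f'-f)\bigr)$ on $F$. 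This $S$ is a rank-one perturbation of $I$ whose norm is controlled by $\|x-y\|/\mathrm{dist}(x,E)$ and whose inverse is given by the Neumann series; the assumption $y\notin E$ is exactly what keeps $S$ invertible.

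With the lemma available, the back-and-forth proceeds as follows. At odd stages I take the least $j\notin P_{k-1}$; since $B$ with finitely many points removed is still dense, I pick an unused index $m$ with $\|b_m-T_{k-1}a_j\|$ as small as needed, set $\sigma(j)=m$, and apply the lemma with $E=E_{k-1}$, $x=T_{k-1}a_j$ (outside $E_{k-1}$ because $a_j\notin\spann\{a_p:p\in P_{k-1}\}$ and $T_{k-1}$ is invertible), and $y=b_m$, choosing the target close enough that $\|S_k-I\|,\|S_k^{-1}-I\|\le\epsilon_k$ for a pre-fixed summable sequence $(\epsilon_k)$. At even stages I dually take the least unused $B$-index $m$ and, using that the image set $T_{k-1}(A)$ minus finitely many points is still dense, pick an unused $A$-index $j$ with $T_{k-1}a_j$ arbitrarily close to $b_m$, and proceed as before. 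Routine bookkeeping shows every index of $A$ and of $B$ is eventually treated and that $\sigma$ becomes a bijection; since $\sum_k\|S_k-I\|<\infty$ and $\sum_k\|S_k^{-1}-I\|<\infty$, the products $S_k\cdots S_1$ and $S_1^{-1}\cdots S_k^{-1}$ converge in operator norm to mutually inverse operators, yielding $T\in GL(X)$ with $T(A)=B$.

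The step I expect to be the real obstacle is reconciling two apparently conflicting demands: to get $T(A)=B$ the operator must move the vectors of $A$ arbitrarily far, yet for the infinite product to converge every factor must lie within a summable distance of the identity. The construction hinges on the observation that there is in fact no conflict — because $B$, and through $T_{k-1}$ the image of $A$, is dense, at the stage where index $j$ is handled there is always a legitimate not-yet-used target essentially on top of $T_{k-1}a_j$, so the long-distance transport is spread over infinitely many stages and each $S_k$ stays as close to the identity as required. Checking that the densities of $B$ minus a finite set and of $T_{k-1}(A)$ minus a finite set can be used simultaneously while never violating the genericity conditions $x\notin E_{k-1}$ and $y\notin E_{k-1}$ is the one place where care is needed.
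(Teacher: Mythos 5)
Your proof is correct, and its combinatorial skeleton is the same back-and-forth that the paper uses (the paper does not reprove Theorem~G itself; it is recovered as the special case of Theorem~\ref{male} with $p$ the norm of $X$ and $D$ the Banach disk supplied by Lemma~\ref{l22}): enumerate $A$ and $B$, alternately treat the least untreated index on each side, use density to find an unused target arbitrarily close to the current image, correct by a rank-one perturbation whose size is summable, and preserve the finitely many matchings already made. Where you genuinely differ is in the assembly and the freezing mechanism. You form an infinite product of invertible factors $S_k$, each forced to be the identity on the span of the already placed $b$'s via a bounded projection along a finite-dimensional subspace, and you get invertibility of the limit from $\sum_k\|S_k-I\|<\infty$ and $\sum_k\|S_k^{-1}-I\|<\infty$; the paper instead builds a single additive perturbation $J=I+T$ with $T=\sum_j f_j(\cdot)v_j$, where each new functional is chosen by Hahn--Banach to vanish on the span of the already matched $a$'s (that is what preserves earlier matchings), and invertibility is obtained once, at the end, from $\sum_j p^*(f_j)p_D(v_j)<1$ via the Neumann-type Lemma~\ref{nuc1}. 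The trade-off: your norm-only control is more elementary but genuinely Banach-specific, whereas the paper's two-gauge bookkeeping (a continuous seminorm $p$ for the functionals, a Banach disk $D$ for the vectors) is exactly what lets the same argument run in Fr\'echet spaces with a continuous norm and also yields the extra conclusion $Jx=x$ on $\ker p$, which is needed in the proof of Theorem~\ref{mamama}. Two small inaccuracies in your write-up, neither fatal: invertibility of your rank-one factor is not ``exactly'' the condition $y\notin E$ --- the precise criterion is $g(f')\neq 0$ --- but it is automatic in the regime you use, since $\|S-I\|<1$; and at the even stages the constant $C(E,x)$ depends on the vector $x=T_{k-1}a_j$ you are still in the process of choosing, so you should note that it stays bounded as $x$ approaches $b_m$ (because $\mathrm{dist}(x,E_{k-1})\to\mathrm{dist}(b_m,E_{k-1})>0$, with $b_m\notin E_{k-1}$ guaranteed by the linear independence of $B$), which is what lets you meet the $\epsilon_k$-bound.
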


The above theorem leads to the following definition.

\begin{definition}\label{gsp} A locally convex topological vector
space $X$ is called a {\it G-space} if $\Sigma(X)$ is non-empty and
$GL(X)$ acts transitively on $\Sigma(X)$.
\end{definition}

Thus Theorem~G states that every separable infinite dimensional
Banach space is a G-space. For the convenience of the reader we
reproduce the derivation of the main result in \cite{gri2} from
Theorem~G.

\begin{lemma}\label{nle} Let $X$ be a G-space possessing a
hypercyclic operator $T_0\in L(X)$. Then every $A\in\Sigma(X)$ is an
orbit.
\end{lemma}

\begin{proof} Let $x_0$ be a hypercyclic vector for $T_0\in L(X)$
and $A\in\Sigma(X)$. Since $X$ is a $G$-space and both $A$ and
$O(T_0,x)$ belong to $\Sigma(X)$, there is $J\in GL(X)$ such that
$J^{-1}(A)=O(T_0,x_0)$. Let $T=JT_0J^{-1}$ and $x=J x_0$. Then
$$
O(T,x)=O(JT_0J^{-1},Jx_0)=\{JT_0^nx:n\in\Z_+\}=J(O(T_0,x_0))=A.
$$
Thus $A$ is an orbit.
\end{proof}

Our main goal is to try and extend Theorem~G to Fr\'echet spaces.
Since every separable infinite dimensional Fr\'echet space supports
a hypercyclic operator \cite{bope}, Lemma~\ref{nle} implies that for
every Fr\'echet space $X$, which is also a G-space, every
$A\in\Sigma(X)$ is an orbit of a hypercyclic operator. Note that
Bonet, Frerick, Peris and Wengenroth \cite{fre} constructed $A\in
\Sigma(\omega)$, which is not an orbit of a hypercyclic operator and
therefore $\omega$ is not a G-space. Our main result is the
following theorem.

\begin{theorem}\label{mama} Let $X$ be a separable infinite dimensional
Fr\'echet space. Then the following statements are equivalent$:$
\begin{itemize}\itemsep=-2pt
\item[\rm(\ref{mama}.1)]$X$ possesses a continuous norm$;$
\item[\rm(\ref{mama}.2)]$X$ is a G-space$;$
\item[\rm(\ref{mama}.3)]every $A\in\Sigma(X)$ is an orbit.
\end{itemize}
\end{theorem}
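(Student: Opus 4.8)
The plan is to establish the cycle $(\ref{mama}.1)\Rightarrow(\ref{mama}.2)\Rightarrow(\ref{mama}.3)\Rightarrow(\ref{mama}.1)$.

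$(\ref{mama}.2)\Rightarrow(\ref{mama}.3)$ is immediate: a separable infinite dimensional Fr\'echet space supports a hypercyclic operator \cite{bope}, so if $X$ is also a G-space, Lemma~\ref{nle} gives that every $A\in\Sigma(X)$ is an orbit.

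$(\ref{mama}.1)\Rightarrow(\ref{mama}.2)$ is the main step, and the idea is to adapt Grivaux's proof of Theorem~G \cite{gri2} in the presence of a continuous norm. Fix a continuous norm $p=p_1$ on $X$ and an increasing fundamental sequence of continuous seminorms $p_1\leq p_2\leq\cdots$. Since $X$ is Fr\'echet, by the open mapping theorem it suffices, given $A=\{a_n:n\in\N\}$ and $B=\{b_n:n\in\N\}$ in $\Sigma(X)$, to produce a \emph{continuous linear bijection} $J$ with $J(A)=B$. I would construct $J$ as a limit of invertible operators $J_0=I,J_1,J_2,\dots$ for which $E_k=J_{k+1}J_k^{-1}-I$ is a small finite-rank operator, built by a back-and-forth procedure that assembles a bijection $A\to B$: at odd steps one commits an as-yet-unassigned $a_n$ to some $b_m$, at even steps one ensures that an as-yet-unhit $b_m$ lies in the range, in both cases using the density of $A$ and $B$ to choose the assignment so that $p_j(E_kx)\leq 2^{-k}p_{m(j,k)}(x)$ for all $j\leq N_k$, where $N_k\to\infty$. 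A diagonal bookkeeping over the seminorms makes the product converge to a continuous operator $J$, while the size control keeps $J$ bijective with $J(A)=B$. The continuous norm is used precisely to keep the linearly independent systems rigid under these perturbations -- a Fr\'echet analogue of the fact that a small perturbation of a basis is again a basis -- which is exactly what is unavailable in $\omega$.

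$(\ref{mama}.3)\Rightarrow(\ref{mama}.1)$ I would prove by contraposition: if $X$ has no continuous norm I exhibit $A\in\Sigma(X)$ that is not an orbit. It is classical that such an $X$ has a complemented subspace isomorphic to $\omega$; write $X=W\oplus V$ with $W\cong\omega$ (and, if $V$ is finite dimensional, $X\cong\omega$ and one is exactly in the situation of \cite{fre}). I would then adapt the construction of Bonet, Frerick, Peris and Wengenroth \cite{fre}: choose the elements of $A$ with a rapidly lacunary profile along the natural filtration of $W$ by finite-codimensional closed subspaces, adding low-order terms so that $A$ is dense and linearly independent in $X$. The point is that for any $T\in L(X)$ with $A=O(T,x_0)$, the continuity of $T$ -- through its forced interaction with this filtration (reflecting, e.g., the fact that any continuous operator from $\omega$ into a normed space has finite-dimensional range) -- imposes a fixed rate at which successive orbit elements can move through the filtration, and this rate cannot match the chosen profile of $A$ while $A$ remains dense.

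The main obstacle is $(\ref{mama}.1)\Rightarrow(\ref{mama}.2)$: running Grivaux's perturbative invertibility argument in a Fr\'echet space forces one to control all the defining seminorms simultaneously while keeping every partial operator bijective, and essentially all the technical work sits there. The construction in $(\ref{mama}.3)\Rightarrow(\ref{mama}.1)$ is delicate as well, but conceptually stays close to the known $\omega$ case once the right filtration is identified, and $(\ref{mama}.2)\Rightarrow(\ref{mama}.3)$ comes for free.
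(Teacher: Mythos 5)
Your overall architecture (the cycle (\ref{mama}.1)$\Rightarrow$(\ref{mama}.2)$\Rightarrow$(\ref{mama}.3)$\Rightarrow$(\ref{mama}.1), with the middle implication obtained from \cite{bope} and Lemma~\ref{nle}) is the paper's, but both nontrivial implications have genuine gaps. For (\ref{mama}.1)$\Rightarrow$(\ref{mama}.2), the back-and-forth shape is right, yet the smallness condition you impose, $p_j(E_kx)\leq 2^{-k}p_{m(j,k)}(x)$ with escalating indices $m(j,k)$, is too weak to deliver the crux: a pointwise limit of invertible operators need not be injective, let alone surjective, and with a loss of seminorm index at each application no Neumann-type series for $J^{-1}$ can be summed, so ``the size control keeps $J$ bijective'' is precisely the unproved step (the open mapping theorem only helps after bijectivity is known). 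The paper's way around this is a device absent from your sketch: Lemma~\ref{l22} produces a single Banach disk $D$ such that \emph{both} $A$ and $B$ are dense in the Banach space $X_D$; then in Theorem~\ref{male} every correction vector is taken $p_D$-small and every functional $p$-bounded (Hahn--Banach with respect to the continuous norm $p$), so the total perturbation $T$ satisfies $p_D(Tx)\leq cp(x)$ with $c<1$ and $I+T$ is inverted by a Neumann series converging in the complete space $X_D$ (Lemma~\ref{nuc1}). It is this single-norm control, not simultaneous control of all the $p_j$ with index shifts, that makes the limit invertible; if you want to avoid the disk you would at least have to bound both $f_k$ and $v_k$ by the fixed continuous norm and prove that the inverses $J_k^{-1}$ form an equicontinuous pointwise convergent sequence, none of which your stated condition yields.

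For (\ref{mama}.3)$\Rightarrow$(\ref{mama}.1), contraposition and the reduction to the known case $X\cong\omega$ \cite{fre} are fine, but for $X$ not isomorphic to $\omega$ your ``lacunary profile along a filtration of a complemented copy of $\omega$'' is only a gesture: you never specify $A$, nor which quantitative consequence of continuity of $T$ on all of $X$ (not merely on $W$) is violated. The paper's construction is different and concrete: write the topology by increasing seminorms $p_n$ with $p_1$ non-trivial and $\ker p_n/\ker p_{n+1}\neq\{0\}$, take a dense countable $p_1$-independent set $B$ (Lemma~\ref{lile}), pick $x_n\in\ker p_n\setminus\ker p_{n+1}$, and put $A=B\cup C$ with $C=\{x_n\}$. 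If $A=O(T,x)$, then over the set $M$ of times at which the orbit jumps from $C$ to $B$ the series $\sum_{n\in M}T^nx/p_1(T^{n+1}x)$ converges absolutely (each $p_k$ kills all but finitely many elements of $C$), while its image under $T$ would have to converge absolutely even though each term has $p_1$-value $1$; hence $M$ is finite, the orbit eventually stays in $C$, and $B$ would be finite, a contradiction. This series argument is the substantive content of the implication and does not appear in your proposal.
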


Our methods can be applied to obtain the following result, which
answers a question raised in \cite{shh} and extends the above
mentioned result of Grivaux stating that every countably dimensional
normed space possesses a hypercyclic operator.

\begin{theorem}\label{mamama} Every countably dimensional metrizable
locally convex space possesses a hypercyclic operator.
\end{theorem}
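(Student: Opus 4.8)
The plan is to produce, on the given countably dimensional metrizable locally convex space $E$, a \emph{dense Hamel basis} $(e_n)_{n\ge 0}$ of $E$ for which the linear map $T$ determined by $Te_n=e_{n+1}$ is continuous; then $O(T,e_0)=\{e_n:n\ge 0\}$ is dense, so $e_0$ is a hypercyclic vector for $T\in L(E)$. Since $E$ is infinite dimensional and separable (the rational combinations of a countable Hamel basis form a countable dense set), its completion $X$ is a separable infinite dimensional Fréchet space, and the construction proceeds differently according to whether or not $X$ carries a continuous norm.

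Suppose first that $X$ has a continuous norm. Then $X$ is a G-space by Theorem~\ref{mama}, and it supports a hypercyclic operator by \cite{bope}, so by Lemma~\ref{nle} every member of $\Sigma(X)$ is an orbit. It therefore suffices to exhibit a dense Hamel basis $A$ of $E$: such an $A$ lies in $\Sigma(X)$ (it is linearly independent and, being dense in $E$ and hence in $X$, dense in $X$), so $A=O(T,x_0)$ for some hypercyclic $T\in L(X)$ and some $x_0\in A$; since $\spann A=E$ and $T(A)=\{T^{n+1}x_0:n\ge 0\}\subseteq A$, the subspace $E$ is $T$-invariant, and because $E\hookrightarrow X$ is a topological embedding, $T|_E\in L(E)$ is hypercyclic with hypercyclic vector $x_0$. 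A dense Hamel basis of $E$ is produced by a routine recursion: given a Hamel basis $(v_n)$ and a dense sequence $(d_n)$ of $E$, one adds at odd steps a vector $d_j+w$, with $w$ in a prescribed neighbourhood of $0$ taken off the finite dimensional affine set that would spoil linear independence (possible since $E$ is infinite dimensional), and at even steps the first $v_j$ not yet spanned.

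Suppose now that $X$ has no continuous norm. Then $X$ is not a G-space by Theorem~\ref{mama} (for instance $X$ may be $\omega$, which is not a G-space by \cite{fre}), the shortcut above is unavailable, and a hypercyclic operator on $E$ must be built by hand — it need not be the restriction of an operator on $X$. I would fix a fundamental increasing sequence of seminorms $p_1\le p_2\le\cdots$ of $E$ and construct the dense Hamel basis $(e_n)_{n\ge 0}$ recursively, now carrying along, besides density, spanning and linear independence, the extra requirement that makes the shift $Te_n=e_{n+1}$ continuous: for each $k$ there should be $m(k)$ and $C_k$ with $p_k(Tx)\le C_k\,p_{m(k)}(x)$ for all $x$; in the model case of a countably dimensional dense subspace of $\omega$ — the prototype being the finitely supported sequences with the topology inherited from $\omega$ — this amounts to forcing the matrix of $T$ to be locally finite. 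The main obstacle is exactly this last requirement. Density forces the vectors $e_n$ to be dispersed throughout $E$, and since $X$ has no continuous norm no single seminorm controls them, whereas continuity of the shift demands uniform control of each $e_{n+1}$ against every seminorm simultaneously; a crude unconditional-basis estimate is ruled out because $X$ may have no unconditional Schauder basis, so the recursion must be arranged so that the contributions of the far-out basis vectors to any fixed coordinate of $Tx$ telescope away, and one must verify that the process yields a genuine dense Hamel basis rather than only a dense linearly independent sequence.
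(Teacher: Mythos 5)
Your first case (the completion $X$ admits a continuous norm) is essentially correct: a dense Hamel basis $A$ of $E$ exists by the recursion you sketch, it belongs to $\Sigma(X)$, Theorem~\ref{mama} makes it the orbit of a hypercyclic $T\in L(X)$, and since $T(A)\subseteq A$ and $\spann A=E$ the restriction $T|_E$ is a hypercyclic operator on $E$. This is a legitimate shortcut, different from (though powered by the same machinery as) the paper's argument. The problem is everything else. Your second case is not a proof at all: you state the plan (a dense Hamel basis of $E$ on which the shift is continuous), identify the obstruction, and stop, so the theorem remains unproved exactly on the spaces where it is hardest, namely when $X$ has no continuous norm. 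Note also that this case is larger than you may intend: it contains not only $E$ with completion $\omega$ but also, e.g., dense countably dimensional subspaces of $\omega\times\ell_2$, and your Case~A technique is genuinely unavailable there because such $X$ fail (\ref{mama}.3). A side remark of yours is also wrong: a continuous operator on $E$ always extends to the completion $X$, so any hypercyclic operator on $E$ \emph{is} the restriction of an operator on $X$; the real difficulty is to build an operator on $X$ that leaves the prescribed $E$ invariant, not to avoid $X$.

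Moreover, your intended route in the remaining case demands more than is needed and more than you can obviously deliver: you insist that the orbit be a full Hamel basis of $E$, whereas the orbit of a hypercyclic vector need not span $E$, and no argument is offered that such a basis-orbit exists when $X$ has no continuous norm. The paper circumvents precisely this point. For $X$ not isomorphic to $\omega$ it takes a non-trivial continuous \emph{seminorm} $p$, uses Lemma~\ref{hype} to produce a hypercyclic $T\in L(X)$ with $Tx=x$ on $\ker p$, chooses a \emph{maximal} $p$-independent subset $A$ of $E$ (dense in $X$), and applies Theorem~\ref{male} to get $J\in GL(X)$ with $J(O(T,u))=A$ and $J=I$ on $\ker p$; then $S=JTJ^{-1}$ has orbit $A\subseteq E$, and $S(E)\subseteq E$ follows because maximality gives $E\subseteq\spann(A)+\ker p$ while $S$ fixes $\ker p$ pointwise --- so the orbit only has to span $E$ modulo $\ker p$, not all of $E$. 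For $X\cong\omega$ the paper invokes Theorem~\ref{omeg} (transitivity of $GL(\omega)$ on dense countably dimensional subspaces) to carry the span of a hypercyclic orbit of the shift onto the given $E$. Neither of these ingredients, nor any substitute for them, appears in your proposal, so as it stands it proves the theorem only when the completion carries a continuous norm.
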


The following result is our main instrument. In order to formulate
it we need to recall a few definitions. A subset $D$ of a locally
convex space $X$ is called a {\it disk} if $D$ is bounded, convex
and balanced (=is stable under multiplication by $\lambda\in\K$ with
$|\lambda|\leq1$). The symbol $X_D$ stands for the space $\spann(D)$
endowed with the norm $\pd$ being the Minkowski functional of the
set $D$. Boundedness of $D$ implies that the topology of $X_D$ is
stronger than the one inherited from $X$. A disk $D$ in $X$ is
called a {\it Banach disk} if the normed space $X_D$ is complete. It
is well-known that a sequentially complete disk is a Banach disk,
see, for instance, \cite{bonet}. In particular, every compact or
sequentially compact disk is a Banach disk.

We say that a seminorm $p$ on a vector space $X$ is {\it
non-trivial} if $X/\ker p$ is infinite dimensional, where
$$
\ker p=\{x\in X:p(x)=0\}.
$$
Note that the topology of a locally convex space is non-weak if and
only if there is a non-trivial continuous seminorm on $X$.

If $p$ is a seminorm on a vector space $X$, we say that $A\subset X$
is $p$-{\it independent} if $p(z_1a_1+{\dots}+z_na_n)\neq 0$ for any
$n\in\N$, any pairwise different $a_1,\dots,a_n\in A$ and any
non-zero $z_1,\dots,z_n\in\K$. In other words, vectors $x+\ker p$
for $x\in A$ are linearly independent in $X/\ker p$.

\begin{theorem}\label{male}
Let $X$ be a locally convex space, $p$ be a continuous seminorm on
$X$, $D$ be a Banach disk in $X$ and $A,B$ be countable subsets of
$X$ such that both $A$ and $B$ are $p$-independent and both $A$ and
$B$ are dense subsets of the Banach space $X_D$. Then there exists
$J\in GL(X)$ such that $J(A)=B$ and $Jx=x$ for every $x\in\ker p$.
\end{theorem}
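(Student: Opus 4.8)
The plan is a perturbative back-and-forth construction producing $J$ as a pointwise limit of operators $J_k\in GL(X)$, each a rank-one perturbation of the previous one; this adapts Grivaux's argument for Theorem~G to the present setting. Write $N=\ker p$, let $Y=X/N$ carry the norm $\tilde p$ induced by $p$, and let $q\colon X\to Y$ be the quotient map, which is continuous since $p$ is. As $D$ is bounded and $p$ is continuous, there are continuous linear inclusions $\iota\colon X_D\to X$ and $X_D\to Y$; as $D$ is a Banach disk, $X_D$ is complete, so $\mathcal B(Y,X_D)$ is a Banach space. Enumerate $A=\{a_n\}$ and $B=\{b_n\}$ without repetitions; both are infinite, and being $p$-independent and dense in $X_D$ they span infinite dimensional subspaces, so $X_D$ is infinite dimensional and every ball in it meets $A\setminus F$ and $B\setminus F$ for every finite $F$. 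I would build $J_k$ along with finite sets $A_k\subset A$, $B_k\subset B$ with $\#A_k=\#B_k$ so that: $J_k$ fixes $N$ pointwise and maps $A_k$ bijectively onto $B_k$; $J_k-I$ and $J_k^{-1}-I$ factor as $\iota\Psi_k q$ and $\iota\Phi_k q$ with $\Psi_k,\Phi_k\in\mathcal B(Y,X_D)$, the increments $\|\Psi_{k+1}-\Psi_k\|$, $\|\Phi_{k+1}-\Phi_k\|$ being summable and so small that the operator $\bar J_k$ induced by $J_k$ on $Y$ satisfies $\|\bar J_k-I\|\leq 1/2$ throughout (so $\bar J_k$ is invertible in $\mathcal B(Y)$ with $\|\bar J_k^{-1}\|\leq 2$); and, crucially, no committed value is ever disturbed: $J_{k'}|_{A_k}=J_k|_{A_k}$ and $J_{k'}^{-1}|_{B_k}=J_k^{-1}|_{B_k}$ for all $k'\geq k$. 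One starts from $J_0=I$, $A_0=B_0=\varnothing$.

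A forward step treats the least-indexed $a\in A\setminus A_k$: put $c=J_k a\in X_D$. The classes of $A_k\cup\{a\}$ in $Y$ are linearly independent (this is $p$-independence of $A$), and since $\bar J_k$ is invertible the class of $c$ lies outside the span of the classes of $B_k$; by Hahn--Banach there is then $g=\bar g\circ q$ with $g|_N=0$, $g(c)=1$, $g|_{B_k}=0$, and $\|\bar g\|$ controlled by the data available before $b$ is chosen. Using density of $B$ in $X_D$, choose $b\in B\setminus B_k$ with $\pd(b-c)$ so small that $g(b)\neq0$ (automatic once $\pd(b-c)$ is small, since $g(b)=1+g(b-c)$) and the required increment bounds hold, and set $J_{k+1}=(I+g\otimes(b-c))J_k$, where $g\otimes v$ is the operator $x\mapsto g(x)v$. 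Then $J_{k+1}$ fixes $N$, fixes $B_k$, sends $a$ to $b$, agrees with $J_k$ on $A_k$, and $J_{k+1}-I$, $J_{k+1}^{-1}-I$ retain the factored form with small increments; put $A_{k+1}=A_k\cup\{a\}$, $B_{k+1}=B_k\cup\{b\}$. A backward step is the symmetric construction with $A,B$ and $J_k,J_k^{-1}$ interchanged (multiplying $J_k$ on the right): it matches the least-indexed $b\in B\setminus B_k$ to some $a\in A\setminus A_k$ with $\pd(a-J_k^{-1}b)$ small, using $p$-independence of $A$ and $B$ together with invertibility of $\bar J_k$ to place the moving functional. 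Alternating the two kinds of steps commits every element of $A$ and of $B$.

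Since the increments are summable and $X_D$ is complete, $\Psi_k\to\Psi$ and $\Phi_k\to\Phi$ in $\mathcal B(Y,X_D)$; set $J=I+\iota\Psi q$ and $J'=I+\iota\Phi q$. Both are continuous on $X$ ($q,\iota$ are continuous, $\Psi,\Phi$ bounded) and both fix $N$ ($q$ kills $N$). One has $J_k x\to Jx$ and $J_k^{-1}x\to J'x$ for all $x\in X$, and since $qJ_k^{-1}x=\bar J_k^{-1}(qx)$ is bounded in $Y$ uniformly in $k$, passing to the limit in $J_kJ_k^{-1}=J_k^{-1}J_k=I$ gives $JJ'=J'J=I$, so $J\in GL(X)$. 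For $a\in A$ the value $J_k a$ is eventually constant, equal to the element of $B$ matched with $a$, so $J(A)\subseteq B$; every $b\in B$ was matched with some $a\in A$, so $J(A)=B$; and $J$ is injective on $A$ since every $J_k$ is. As $J$ fixes $\ker p$ pointwise, the theorem follows.

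The crux is the single back-and-forth step, where the rank-one perturbation must at once be invertible, fix $\ker p$ and everything committed so far, and have norm as small as desired. The first two demands force the moving functional $g$ to vanish on a prescribed finite set and on $\ker p$ while being nonzero on both source and target --- possible exactly because $p$-independence makes the relevant classes in $X/\ker p$ linearly independent, so Hahn--Banach applies, with the invariant $\|\bar J_k-I\|\leq 1/2$ keeping images in general position. The third demand is met by density of $A$ and $B$ in $X_D$: taking the target $\pd$-close to the source makes the perturbation small and simultaneously secures invertibility. A secondary difficulty is that the limit must be invertible on all of $X$, not merely on $X_D$; this is handled by carrying along the factorizations $J_k-I=\iota\Psi_k q$ with $\Psi_k$ valued in the \emph{complete} space $X_D$, so that the limits $\Psi,\Phi$ exist in $\mathcal B(Y,X_D)$ and $JJ'=J'J=I$ survives passage to the limit.
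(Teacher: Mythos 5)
Your proposal is correct and follows essentially the same route as the paper: a back-and-forth construction by rank-one perturbations that vanish on $\ker p$ and on all previously matched vectors, with each new image or preimage chosen $p_D$-close inside the Banach space $X_D$ (using density of $A$ and $B$ there) so that the corrections are summably small and the limit fixes $\ker p$. The only differences are organizational rather than substantive: you accumulate the corrections multiplicatively and track the inverses $J_k^{-1}$ explicitly through the quotient $X/\ker p$, whereas the paper writes $J=I+T$ with $T=\sum f_j(\cdot)v_j$, $\sum p^*(f_j)p_D(v_j)<1$, and obtains invertibility of the limit from the Neumann-series argument of Lemma~\ref{nuc1}.
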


We prove Theorem~\ref{male} in Section~\ref{s2}. In Section~\ref{s3}
we show that $GL(\omega)$ acts transitively on the set of dense
countably dimensional subspaces of $\omega$. Section~\ref{s4} is
devoted to the proof of Theorem~\ref{mama}. We prove
Theorem~\ref{mamama} in Section~\ref{s5} and discuss open problems
in Section~\ref{s6}.

\section{Proof of Theorem~\ref{male}\label{s2}}

For a continuous seminorm $p$ on a locally convex space $X$, we
denote
$$
X'_p=\{f\in X':p^*(f)=\sup\{|f(x)|:x\in X,\ p(x)\leq 1\}<\infty\}.
$$
Note that $p^*(f)$ (if finite) is the smallest non-negative number
$c$ such that $f(x)\leq cp(x)$ for every $x\in X$.

\begin{lemma}\label{nuc1} Let $X$ be a locally convex space, $p$ be a
continuous seminorm on $X$, $D$ be a Banach disk in $X$ and
$\{x_n\}_{n\in\N}$ and $\{f_n\}_{n\in\N}$ be sequences in $X_D$ and
$X'$ respectively such that $c=\sum\limits_{n=1}^\infty
p^*(f_n)p_D(x_n)<\infty$. Then the formula
$Tx=\sum\limits_{n=1}^\infty f_n(x)x_n$ defines a continuous linear
operator on $X$. Furthermore, if $p$ is bounded by $1$ on $D$ and
$c<1$, then the operator $I+T$ is invertible.
\end{lemma}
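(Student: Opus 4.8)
The plan is to transfer everything to the Banach space $X_D$, using that $T$ maps all of $X$ into $X_D$ and behaves there like a bounded operator.

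\emph{Existence and continuity of $T$.} For $x\in X$ one has $\sum_{n}|f_n(x)|\,p_D(x_n)\le p(x)\sum_n p^*(f_n)\,p_D(x_n)=c\,p(x)<\infty$, so the series $\sum_n f_n(x)x_n$ converges absolutely in the Banach space $X_D$, and therefore also in $X$, since the topology of $X_D$ is stronger than the one inherited from $X$. Thus $Tx\in X_D$ is well defined, $T$ is visibly linear, and $p_D(Tx)\le c\,p(x)$. This last inequality together with continuity of $p$ shows that $T\colon X\to X_D$ is continuous; composing with the continuous inclusion $X_D\hookrightarrow X$ gives $T\in L(X)$.

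\emph{The invertibility statement.} Assume now that $p\le 1$ on $D$ and $c<1$. Since $D\subseteq\{x\in X:p(x)\le 1\}$, comparing Minkowski functionals yields $p(x)\le p_D(x)$ for every $x\in X_D$. Hence, for $y\in X_D$, $p_D(Ty)\le c\,p(y)\le c\,p_D(y)$, i.e. the restriction of $T$ to $X_D$ is a bounded linear operator of norm at most $c$. Consequently, for every $x\in X$ and every $k\ge 1$, writing $T^kx=T^{k-1}(Tx)$ with $Tx\in X_D$, we obtain $p_D(T^kx)\le c^{k-1}p_D(Tx)\le c^k p(x)$. It follows that $\sum_{k\ge 1}p_D(T^kx)\le p(x)\,c/(1-c)<\infty$, so $Rx:=\sum_{k\ge 1}(-1)^kT^kx$ converges absolutely in $X_D$, hence in $X$, with $p_D(Rx)\le \frac{c}{1-c}\,p(x)$; as above this gives $R\in L(X)$, and we set $S=I+R\in L(X)$, the formal Neumann series $\sum_{k\ge 0}(-1)^kT^k$. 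Applying the continuous operator $T$ termwise to the $X$-convergent series defining $Sx$ yields $T(Sx)=\sum_{k\ge 0}(-1)^kT^{k+1}x=-\sum_{k\ge 1}(-1)^kT^kx$, whence $(I+T)Sx=Sx+T(Sx)=x$; a symmetric computation gives $S(I+T)x=x$. Therefore $I+T\in GL(X)$ with $(I+T)^{-1}=S$.

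\emph{The main point.} Since $X$ is merely locally convex, there is no ambient Banach norm on which to run the usual Neumann series argument directly; the crux is the observation that $T$ takes values in $X_D$ and is an honest contraction for the $X_D$-norm, so the geometric estimates force the Neumann series to converge in $X_D$ and hence in $X$. The only care required is in justifying the termwise application of the continuous operators $T$ and $S$ to series that converge in $X_D$ (and a fortiori in $X$), which is routine given the absolute convergence established above.
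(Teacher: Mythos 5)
Your proof is correct and follows essentially the same route as the paper: establish $p_D(Tx)\le c\,p(x)$ so that $T$ maps $X$ continuously into $X_D$, then use $p\le p_D$ on $X_D$ to get the geometric estimate $p_D(T^kx)\le c^kp(x)$ and sum the Neumann series in $X_D$ to produce the inverse of $I+T$. The only difference is cosmetic (you verify the "routine" identity $(I+T)S=S(I+T)=I$ by termwise application of the continuous operator $T$, which the paper leaves to the reader).
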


\begin{proof} Clearly $p_D(f_n(x)x_n)=|f_n(x)|p_D(x_n)\leq
p(x)p^*(f_n)p_D(x_n)$. It follows that the series defining $Tx$
converges in $X_D$ and
$$
p_D(Tx)\leq cp(x)\ \ \text{for every $x\in X$.}
$$
Thus $T$ is a well-defined continuous linear map from $X$ to $X_D$.
Since the topology of $X_D$ is stronger than the one inherited from
$X$, $T\in L(X)$.

Assume now that $p$ is bounded by $1$ on $D$ and $c<1$. Then from
the inequality $p(x)\leq p_D(x)$ and the above display it follows
that $p_D(T^nx)\leq c^np(x)$ for every $x\in X$ and $n\in\N$. Since
$c<1$, the formula $Sx=\sum\limits_{n=1}^\infty (-T)^nx$ defines a
linear map from $X$ to $X_D$ satisfying $p_D(Sx)\leq
\frac{c}{1-c}p(x)$ for $x\in X$. Thus $S$ is continuous as a map
from $X$ to $X_D$ and therefore $S\in L(X)$. It is a routine
exercise to check that $(I+S)(I+T)=(I+T)(I+S)=I$. That is, $I+T$ is
invertible.
\end{proof}

\begin{lemma}\label{step1}Let $\epsilon>0$, $X$ be a locally convex
space, $D$ be a Banach disk in $X$, $Y$ be a closed linear subspace
of $X$, $M\subseteq Y\cap X_D$ be a dense subset of $Y$ such that
$M$ is $p_D$-dense in $Y\cap X_D$, $p$ be a continuous seminorm on
$X$, $L$ be a finite dimensional subspace of $X$ and $T\in L(X)$ be
a finite rank operator such that $T(Y)\subseteq Y\cap X_D$, $T(\ker
p)\subseteq\ker p$ and $\ker (I+T)=\{0\}$. Then
\begin{itemize}
\item[{\rm (1)}]for every $u\in Y\cap X_D$ such that
$(u+L)\cap \ker p=\varnothing$, there are $f\in X'$ and $v\in Y\cap
X_D$ such that $p^*(f)=1$, $f\bigr|_L=0$, $p_D(v)<\epsilon$,
$(I+R)u\in M$ and $\ker(I+R)=\{0\}$, where $Rx=Tx+f(x)v;$
\item[{\rm (2)}]for every $u\in Y\cap X_D$ such that
$(u+(I+T)(L))\cap\ker p=\varnothing$, there are $f\in X'$, $a\in M$
and $v\in Y\cap X_D$ such that $p^*(f)=1$, $f\bigr|_L=0$,
$p_D(v)<\epsilon$, $(I+R)a=u$ and $\ker(I+R)=\{0\}$, where
$Rx=Tx+f(x)v.$
\end{itemize}
\end{lemma}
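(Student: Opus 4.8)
The plan is to treat (1) and (2) in parallel, since each amounts to a rank-one correction of the given finite-rank operator $T$ that forces a single prescribed vector into (or out of) the dense set $M$, while preserving injectivity of $I+{}\cdot{}$ and the annihilation condition on $L$. Consider first part (1). Since $T$ has finite rank and $L$ is finite dimensional, the subspace $W=T(X)+L+\spann\{u\}\subseteq X$ is finite dimensional, hence $p\bigr|_W$ has nontrivial kernel only along $W\cap\ker p$; the hypothesis $(u+L)\cap\ker p=\varnothing$ says precisely that $u\notin L+\ker p$. First I would pick, by Hahn--Banach applied on the finite-dimensional quotient $W/(W\cap\ker p)$ and then extended continuously to all of $X$ (using that $p$ is a continuous seminorm, so any functional dominated by a multiple of $p$ on $W$ extends with the same $p^*$-bound), a functional $f\in X'_p$ with $p^*(f)=1$, $f\bigr|_L=0$ and $f(u)\neq 0$. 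The point of $f(u)\neq 0$ is that the correction $Rx=Tx+f(x)v$ then moves $u$: we have $(I+R)u=(I+T)u+f(u)v$, and since $f(u)\neq 0$ we may choose $v$ to be any vector we like of the form $v=\frac{1}{f(u)}(a-(I+T)u)$ with $a\in M$. Because $M$ is $p_D$-dense in $Y\cap X_D$ and $(I+T)u\in Y\cap X_D$ (as $u\in Y\cap X_D$, $T(Y)\subseteq Y\cap X_D$), we can choose $a\in M$ with $p_D(a-(I+T)u)<\epsilon|f(u)|$, giving $p_D(v)<\epsilon$ and $(I+R)u=a\in M$, and $v\in Y\cap X_D$ since both $a$ and $(I+T)u$ lie there.

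The remaining and more delicate point is to guarantee $\ker(I+R)=\{0\}$ while $v$ is already essentially pinned down up to adding elements of $\ker f$. Here I would exploit the freedom that the choice of $a\in M$ is not unique: the set of admissible $a$ is $M\cap B$, where $B$ is the open $p_D$-ball of radius $\epsilon|f(u)|$ about $(I+T)u$, an infinite $p_D$-dense subset of an open set, hence infinite-dimensional in an affine sense. Injectivity of $I+R$ fails only if there is a nonzero $x$ with $(I+T)x=-f(x)v$; since $I+T$ is injective, this forces $f(x)\neq 0$, so after normalizing $f(x)=-1$ we would need $v\in(I+T)(\ker(f+1)\text{-affine slice})$, a condition that fails for all but an affine subspace of codimension $\geq 1$ worth of choices of $v$. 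Concretely, $I+R$ is singular iff $v$ lies in a fixed finite-dimensional subspace $V_0\subseteq Y\cap X_D$ depending only on $T$ and $f$ (namely $V_0=(I+T)(\{x:f(x)=-1\})$ spanned appropriately together with the condition coming from $\ker(I+T)=\{0\}$; more precisely one checks $\ker(I+R)\neq\{0\}\iff 1\in$ spectrum of a rank-one operator, i.e. an explicit scalar equation linear in $v$). Since $V_0$ is a proper affine condition and the admissible $v$'s range over a $p_D$-dense subset of a nonempty $p_D$-open set, we can select $a\in M$ (equivalently $v$) avoiding it. This is the step I expect to be the main obstacle: one must verify carefully that the ``bad'' set of $v$ is genuinely thin (a finite union of affine subspaces of positive codimension in the relevant $p_D$-topology) and that $M$, being merely dense rather than a linear subspace, still meets the complement; the honest way to see the latter is that the complement of $V_0$ intersected with the open ball $B$ is itself open and nonempty, hence meets the dense set $M$.

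Part (2) is the mirror image. Now $u\in Y\cap X_D$ with $(u+(I+T)(L))\cap\ker p=\varnothing$, and we want to hit $u$ from some $a\in M$: $(I+R)a=(I+T)a+f(a)v=u$. I would first choose $a\in M$ close in $p_D$ to $(I+T)^{-1}u$ — note $(I+T)^{-1}$ makes sense as $\ker(I+T)=\{0\}$ and, restricted to the finite-dimensional range-relevant part, is bounded — so that $(I+T)a$ is $p_D$-close to $u$ and $u-(I+T)a$ is small; the hypothesis on $(I+T)(L)$ ensures one can pick $f\in X'_p$ with $p^*(f)=1$, $f\bigr|_L=0$ and $f(a)\neq 0$ (this uses that $a$ can be taken outside $L+\ker p$, which follows from the displacement condition on $u$ after applying $(I+T)^{-1}$). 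Then set $v=\frac{1}{f(a)}(u-(I+T)a)$, which lies in $Y\cap X_D$ and has $p_D(v)<\epsilon$ by the closeness of $a$; and again the injectivity condition $\ker(I+R)=\{0\}$ cuts out only a thin affine set of $v$, avoidable by perturbing the choice of $a$ within $M$. The verification that $f\bigr|_L=0$ is compatible with $f(a)\neq 0$ and with the $p^*(f)=1$ normalization is, as in (1), a Hahn--Banach argument on a finite-dimensional quotient; the injectivity bookkeeping is the same rank-one spectral computation as before.
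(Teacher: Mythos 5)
Your part (1) is essentially correct, and it replaces the paper's mechanism for injectivity by a different one: the paper shows there is a $\delta>0$ (via a compactness argument in the finite-dimensional range of $T$) such that $p_D(v)<\delta$ already forces $\ker(I+T+f(\cdot)v)=\{0\}$, whereas you observe that singularity of $I+R$ is equivalent to the single scalar equation $f\bigl((I+T)^{-1}v\bigr)=-1$ and avoid it generically. Your description of the bad set as a \emph{finite-dimensional} subspace $V_0=(I+T)(\{x:f(x)=-1\})$ is wrong -- it is a closed affine hyperplane -- but your alternative formulation via the scalar equation is the correct one, and the avoidance does go through: in terms of $a$ the bad condition reads $f\bigl((I+T)^{-1}a\bigr)=0$, the centre $(I+T)u$ of the admissible ball satisfies $f\bigl((I+T)^{-1}(I+T)u\bigr)=f(u)\neq0$, so the good set is a nonempty relatively $p_D$-open subset of $Y\cap X_D$ and meets the $p_D$-dense set $M$. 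So for (1) your route is sound and arguably no harder than the paper's.

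Part (2), however, has a genuine gap caused by the order of your choices. You pick $a\in M$ close to $w=(I+T)^{-1}u$ first and only then choose $f$ with $f|_L=0$, $p^*(f)=1$ and $f(a)\neq0$, setting $v=\frac{1}{f(a)}\bigl(u-(I+T)a\bigr)$. With this $v$ one computes $f\bigl((I+T)^{-1}v\bigr)=\frac{f(w)}{f(a)}-1$, so $\ker(I+R)\neq\{0\}$ holds if and only if $f(w)=0$ -- a condition that does not involve $a$ at all. Hence your proposed remedy, ``perturbing the choice of $a$ within $M$,'' cannot restore injectivity for a fixed $f$; what is needed is to choose $f$ so that $f(w)\neq0$. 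Moreover, the bound $p_D(v)<\epsilon$ requires $p_D\bigl(u-(I+T)a\bigr)<\epsilon|f(a)|$, but since $f$ is chosen after $a$ you have no lower bound on $|f(a)|$, so the required closeness of $a$ to $w$ is a moving target. Both defects disappear if you reverse the order, which is exactly what the paper does: from $(u+(I+T)(L))\cap\ker p=\varnothing$ and $T(\ker p)\subseteq\ker p$ one gets $(w+L)\cap\ker p=\varnothing$, so Hahn--Banach yields $f$ with $p^*(f)=1$, $f|_L=0$ and $f(w)\neq0$; then any $a\in M$ sufficiently $p_D$-close to $w$ has $|f(a)|$ bounded away from $0$, makes $v=\frac{1}{f(a)}(I+T)(w-a)$ small in $p_D$, and gives $(I+R)a=u$ with $I+R$ injective.
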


\begin{proof} Let $u\in Y\cap X_D$ be such that
$(u+L)\cap \ker p=\varnothing$. The Hahn--Banach theorem provides
$f\in X'$ such that $p^*(f)=1$, $f(u)\neq 0$ and $f\bigr|_L=0$.
First, observe that there is $\delta>0$ such that the only solution
of the equation $Tx+x+f(x)v=0$ is $x=0$ whenever $v\in X_D$ and
$p_D(v)<\delta$. Indeed, assume the contrary. Then there exist
sequences $\{x_n\}_{n\in\N}$ and $\{v_n\}_{n\in\N}$ in $X_D$ such
that $p_D(x_n)=1$ for every $n\in\N$, $p_D(v_n)\to 0$ and
$Tx_n+x_n+f(x_n)v_n=0$ for each $n\in\N$. Since $\{Tx_n\}_{n\in\N}$
is a bounded sequence in the finite dimensional subspace $T(X)$ of
$X_D$, passing to a subsequence, if necessary, we can without loss
of generality assume that $x_n$ converges to $x\in T(X)\subset X_D$
with respect to $p_D$. That is, $p_D(x)=1$ and $p_D(x_n-x)\to 0$.
Passing to the $p_D$-limit in $Tx_n+x_n+f(x_n)v_n=0$, we obtain
$Tx+x=0$, which contradicts the equality $\ker(I+T)=\{0\}$. Thus
there is $\delta>0$ such that the only solution of the equation
$Tx+x=f(x)v$ is $x=0$ whenever $v\in X_D$ and $p_D(v)<\delta$. Since
$M$ is $p_D$-dense in $Y\cap X_D$ and $u+Tu\in Y\cap X_D$, there is
$r\in M$ such that
$p_D(r-u-Tu)<\min\{\delta|f(u)|,\epsilon|f(u)|\}$. Define
$v=\frac1{f(u)}(r-u-Tu)\in Y\cap X_D$ and $Rx=Tx+f(x)v$. Clearly,
$p_D(v)<\epsilon$. Since $p_D(v)<\delta$, $\ker (I+R)=\{0\}$. A
direct computation gives $(I+R)u=u+Tu+\frac1{f(u)}f(u)(r-u-Tu)=r\in
M$. Thus $f$ and $v$ satisfy all desired conditions.

Now assume that $u\in Y\cap X_D$ is such that $(u+(I+T)(L))\cap\ker
p=\varnothing$. The fact that $T$ has finite rank and $\ker
(I+T)=\{0\}$ implies that $I+T$ is invertible. Furthermore, the
inclusion $T(X)\subseteq Y\cap X_D$ and the finiteness of the rank
of $T$ imply that $(I+T)^{-1}(Y)\subseteq Y$ and
$(I+T)^{-1}(X_D)\subseteq X_D$. Thus there is a unique $w\in Y\cap
X_D$ such that $(I+T)w=u$. Since $(u+(I+T)(L))\cap\ker
p=\varnothing$ and $T(\ker p)\subseteq\ker p$, we have  $(w+L)\cap
\ker p=\varnothing$. The Hahn--Banach theorem provided $f\in X'$
such that $p^*(f)=1$, $f(w)\neq 0$ and $f\bigr|_L=0$. Exactly as in
the first part of the proof, we observe that there is $\delta>0$
such that the only solution of the equation $Tx+x+f(x)v=0$ is $x=0$
whenever $v\in X_D$ and $p_D(v)<\delta$. Since $f(w)\neq 0$ and $M$
is $p_D$-dense in $Y\cap X_D$, we can find $a\in M$ (close enough to
$w$ with respect to $p_D$) such that $p_D(v)<\delta$ and
$p_D(v)<\delta$, where $v=\frac1{f(a)}(I+T)(w-a)$. Now set
$Rx=Tx+f(x)v$. Since $p_D(v)<\delta$, $\ker (I+R)=\{0\}$. A direct
computation gives $(I+R)a=a+Ta+\frac1{f(a)}f(a)(I+T)(w-a)=(I+T)w=u$.
Thus $f$, $a$ and $v$ satisfy all desired conditions.
\end{proof}

Now we are ready to prove Theorem~\ref{male}. Without loss of
generality, we may assume that $p$ is bounded by $1$ on $D$.
Equivalently, $p(x)\leq p_D(x)$ for every $x\in X_D$.

Fix arbitrary bijections $a:\N\to A$ and $b:\N\to B$ and a sequence
$\{\epsilon_n\}_{n\in\N}$ of positive numbers such that
$\sum\limits_{n=1}^\infty\epsilon_n<1$. We shall construct
inductively sequences $\{n_k\}_{k\in\N}$ and $\{m_k\}_{k\in\N}$ of
positive integers, $\{v_k\}_{k\in\N}$ in $X_D$, $\{f_k\}_{k\in\N}$
in $X_p'$ and $\{T_n\}_{n\in\Z_+}$ in $L(X)$ such that $T_0=0$ and
for every $k\in\N$,
\begin{align}
&m_j\neq m_l\ \ \text{and}\ \ n_j\neq n_l\ \ \text{for $1\leq
j<l\leq 2k$}; \label{t1}
\\
&\{1,\dots,k\}\subseteq \{n_1,\dots,n_{2k}\}\cap
\{m_1,\dots,m_{2k}\}\ \ \text{for $k\geq 1$}; \label{t2}
\\
&\text{$p_D(v_j)<\epsilon_j$ and $p^*(f_j)\leq 1$ for $1\leq j\leq
2k$;} \label{t3}
\\
&(T_k-T_{k-1})x=f_{2k-1}(x)v_{2k-1}+f_{2k}(x)v_{2k}; \label{t4}
\\
&(I+T_k)a(n_j)=b(m_j)\ \ \text{for $1\leq j\leq 2k$}. \label{t5}
\end{align}

$T_0=0$ serves as the basis of induction. Let $q\geq 1$ and assume
that $m_j,n_j,v_j,f_j,T_j$ for $j\leq 2q-2$ satisfying
(\ref{t1}--\ref{t5}) are already constructed. By (\ref{t4}) with
$k<q$,
$$
T_{q-1}x=\sum_{j=1}^{2q-2} f_j(x)v_j.
$$
According to (\ref{t3}) with $k=q-1$,
$\sum\limits_{j=1}^{2q-2}p^*(f_j)p_D(v_j)<\sum\limits_{j=1}^{2n-2}\epsilon_j<1$.
By Lemma~\ref{nuc1}, $I+T_{q-1}$ is invertible. Since each
$p^*(f_j)$ is finite, $T_{q-1}$ vanishes on $\ker p$. In particular,
$T_{q-1}(\ker p)\subseteq \ker p$. Since each $v_j$ belongs to
$X_D$, $T_{q-1}(X)\subseteq X_D$. Clearly $T_{q-1}$ has finite rank.
Set $Y=X$, $n_{2q-1}=\min (\N\setminus \{n_1,\dots,n_{2q-2}\})$ and
$m_{2q}=\min(\N\setminus \{m_1,\dots,m_{2q-2}\})$. Since $A$ is
$p$-independent $(u+L)\cap \ker
 p=\varnothing$, where $u=a(n_{2q-1})$ and $L=\spann\{a(n_1),\dots,a_{n_{2q-2}}\}$.
Applying the first part of Lemma~\ref{step1} with the just defined
$u$, $Y$ and $L$ and with $T=T_{q-1}$, $\epsilon=\epsilon_{2q-1}$
and $M=B\setminus\{b(m_{2q}),b(m_1),b(m_2),\dots,b(m_{2q-2})\}$, we
find $f_{2q-1}\in X'$ and $v_{2q-1}\in X_D$ such that
$p^*(f_{2q-1})=1$, $f_{2q-1}\bigr|_L=0$,
$p_D(v_{2q-1})<\epsilon_{2q-1}$ and $(I+S)u\in M$, where
$Sx=T_{q-1}x+f_{2q-1}(x)v_{2q-1}$. The inclusion $(I+S)u\in M$ means
that $(I+S)u=b(m_{2q-1})$ for some
$m_{2q-1}\in\N\setminus\{m_{2q},m_1,m_2,\dots,m_{2q-2}\}$. Since
$u=a(n_{2q-1})$ and $f_{2q-1}\bigr|_L=0$, (\ref{t5}) with $k=q-1$
implies that
$$
(I+S)a(m_j)=b(n_j)\ \ \text{for}\ \ 1\leq j\leq 2q-1.
$$
By definition of $S$, $Sx=\sum\limits_{j=1}^{2q-1} f_j(x)v_j$ with
$\sum\limits_{j=1}^{2q-1}p^*(f_j)p_D(v_j)<\sum\limits_{j=1}^{2q-1}\epsilon_j<1$.
By Lemma~\ref{nuc1}, $I+S$ is invertible. Since each $p^*(f_j)$ is
finite and each $v_j$ belongs to $X_D$, $S$ vanishes on $\ker p$ and
$S(X)\subseteq X_D$. Clearly $S$ has finite rank. Since $B$ is
$p$-independent the above display ensures that $(u+(I+S)(L))\cap
\ker p=\varnothing$, where $u=b(m_{2q})$ and $L=\spann\{a(n_j):1\leq
j\leq 2q-1\}$. Applying the second part of Lemma~\ref{step1} with
$Y=X$ and the just defined $u$ and $L$ and with $T=S$,
$\epsilon=\epsilon_{2q}$ and $M=A\setminus\{a(n_j):1\leq j\leq
2q-1\}$, we find $f_{2q}\in X_p'$, $v_{2q}\in X_D$ and $w\in M$ such
that $p^*(f_{2q})=1$, $f_{2q}\bigr|_L=0$,
$p_D(v_{2q})<\epsilon_{2q}$ and $(I+T_{q})w=u$, where
$T_qx=Sx+f_{2n}(x)v_{2n}$. The inclusion $w\in M$ means that
$w=a(n_{2q})$ for some  $n_{2q}\in
\N\setminus\{n_1,\dots,n_{2q-1}\}$. Since $u=b(m_{2q})$ and
$f_{2n}\bigr|_L=0$, the above display yields
$$
(I+T_q)a(m_j)=b(n_j)\ \ \text{for}\ \ 1\leq j\leq 2q.
$$
Since $n_{2q-1}\neq n_{2q}$, $m_{2q-1}\neq m_{2q}$,
$n_{2q-1},n_{2q}\notin\{n_1,\dots,n_{2q-2}\}$ and
$m_{2q-1},m_{2q}\notin\{n_1,\dots,m_{2q-2}\}$, (\ref{t1}) with $k=q$
follow from (\ref{t1}) with $k=q-1$. By construction, (\ref{t3}),
(\ref{t4}) and (\ref{t5}) with $k=q$ are satisfied. Since
$n_{2q-1}=\min (\N\setminus \{n_1,\dots,n_{2q-2}\})$ and
$m_{2q}=\min(\N\setminus \{m_1,\dots,m_{2q-2}\})$, (\ref{t2}) for
$k=q$ follows from (\ref{t2}) with $k=q-1$. Thus
(\ref{t1}--\ref{t5}) are all satisfied for $k=q$. This concludes the
inductive construction of $m_j,n_j,v_j,f_j,T_j$ for $j\in\N$.

By (\ref{t1}) and (\ref{t2}), the map $n_j\mapsto m_j$ is a
bijection from $\N$ to itself. By (\ref{t3}) and (\ref{t4}), the
sequence $\{T_n\}$ converges pointwise to the operator $T\in L(X)$
given by the formula $Tx=\sum\limits_{j=1}^\infty f_j(x)v_j$. Since
$\sum\epsilon_j<1$, (\ref{t3}) and Lemma~\ref{nuc1} imply that
$J=I+T$ is invertible. Since $p^*(f_j)<\infty$ for every $j$, $T$
vanishes on $\ker p$ and therefore $Jx=x$ for $x\in\ker p$. Passing
to the limit in (\ref{t5}), we obtain that $Ja(n_j)=b(m_j)$ for
every $j\in\N$. Since $n_j\mapsto m_j$ is a bijection from $\N$ to
itself, we get $J(A)=B$. Thus $J$ satisfies all required conditions.
The proof of Theorem~\ref{male} is now complete.

\section{Countably dimensional subspaces of $\omega$ \label{s3}}

The main result of this section is the following theorem.

\begin{theorem}\label{omeg} $GL(\omega)$ acts transitively on the
set of dense countably dimensional linear subspaces of $\omega$.
\end{theorem}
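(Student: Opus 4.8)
The plan is to reduce transitivity to a single model subspace. Write $\delta_n\in\omega$ for the sequence with $1$ in position $n$ and zeros elsewhere and put $c_{00}=\spann\{\delta_n:n\in\N\}$, a dense countably dimensional subspace of $\omega$. It is enough to prove that for every dense countably dimensional linear subspace $E$ of $\omega$ there is $M_E\in GL(\omega)$ with $M_E(c_{00})=E$: then for two such subspaces $E,F$ the operator $M_FM_E^{-1}$ belongs to $GL(\omega)$ and maps $E$ onto $M_F(c_{00})=F$. (This is consistent with $\omega$ not being a $G$-space, since here we only move dense \emph{subspaces}, i.e.\ spans of dense independent sets, and not the sets themselves.)

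To build $M_E$ I would put a Hamel basis of $E$ into block echelon form relative to the coordinates of $\omega$. Let $\pi_i$ be the $i$-th coordinate functional. The only feature of $E$ used is density, which amounts to surjectivity of $x\mapsto(\pi_1(x),\dots,\pi_N(x))$ on $E$ for every $N$; it follows that for each $k$ the subspace $E_k^{\flat}:=E\cap\ker\pi_1\cap\dots\cap\ker\pi_k$ has surjective finite-coordinate projections past the $k$-th, i.e.\ $x\mapsto(\pi_{k+1}(x),\dots,\pi_N(x))$ maps $E_k^{\flat}$ onto $\K^{N-k}$ whenever $k<N$. I would then construct inductively a strictly increasing sequence $0=k_0<k_1<k_2<\cdots$, an increasing chain $\{0\}=E_0\subseteq E_1\subseteq E_2\subseteq\cdots$ of subspaces of $E$ with $\dim E_j=k_j$, and for each $j$ vectors $u_{k_{j-1}+1},\dots,u_{k_j}$ that span a complement of $E_{j-1}$ in $E_j$, subject to: each such $u_n$ lies in $E_{k_{j-1}}^{\flat}$, and the vectors $(\pi_{k_{j-1}+1}(u_n),\dots,\pi_{k_j}(u_n))$ for $k_{j-1}<n\le k_j$ form a basis of $\K^{\,k_j-k_{j-1}}$. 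The surjectivity statement is exactly what lets one choose each new block with this prescribed leading behaviour. To force the $u_n$ to exhaust $E$, I would interleave the construction with a fixed enumeration $\{e_m\}$ of a Hamel basis of $E$: since the block conditions make $E_{j-1}$ project isomorphically onto $\K^{k_{j-1}}$, at step $j$ there is a unique $w\in E_{j-1}$ agreeing with $e_j$ on the first $k_{j-1}$ coordinates, whence $e_j-w\in E_{k_{j-1}}^{\flat}$; I would then take $k_j$ large and choose the $j$-th block so that its span contains $e_j-w$, guaranteeing $e_j\in E_j$. Thus $\bigcup_jE_j=E$ and $\{u_n:n\in\N\}$ is a Hamel basis of $E$.

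Given such a basis, let $M_E$ be the operator whose matrix is $(\pi_r(u_n))_{r,n\in\N}$, so $M_E\delta_n=u_n$ and $M_E(c_{00})=\spann\{u_n:n\in\N\}=E$. The block conditions say exactly that $M_E$ is block lower triangular for the partition $\N=\bigsqcup_{j\ge1}\{k_{j-1}+1,\dots,k_j\}$ with invertible diagonal blocks; hence every row of $M_E$ has only finitely many nonzero entries (so $M_E\in L(\omega)$), $M_E$ is invertible by block forward substitution, and $M_E^{-1}$ is again block lower triangular and therefore continuous. So $M_E\in GL(\omega)$, which finishes the proof. The step I expect to be most delicate is the inductive construction of the echelon basis: keeping the tail subspaces $E_k^{\flat}$ available with surjective coordinate projections at every stage while simultaneously absorbing the enumeration $\{e_m\}$ so that the $u_n$ exhaust $E$. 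The matrix computation identifying $M_E$ as an element of $GL(\omega)$, and the final composition $M_FM_E^{-1}$, are routine.
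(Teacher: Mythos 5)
Your proof is correct, and it shares the paper's overall strategy -- reduce transitivity to showing that every dense countably dimensional subspace $E$ is the image of the space $\phi$ of finitely supported sequences (your $c_{00}$) under an element of $GL(\omega)$ whose matrix is triangular in a suitable sense -- but the way you produce the triangular basis is genuinely different. The paper runs a back-and-forth argument (its Lemmas on invertible minors) to build bijections $\alpha,\beta:\N\to\N$ so that all leading minors $\{\delta_{\alpha(j)}(u_{\beta(k)})\}_{j,k\le n}$ are invertible, then Gaussian-eliminates to get a Hamel basis of $E$ that is strictly lower triangular with unit diagonal with respect to the \emph{shuffled} coordinate basis $\{e_{\alpha(j)}\}$; invertibility of the resulting operator is then immediate. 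You instead keep the coordinates in their natural order and build a block echelon basis, the key input being the elementary observation that density of $E$ makes $x\mapsto(\pi_{k+1}(x),\dots,\pi_N(x))$ surjective on $E\cap\ker\pi_1\cap\dots\cap\ker\pi_k$ for all $k<N$ (which is indeed correct: a preimage in $E$ of a vector vanishing in the first $k$ coordinates already lies in that intersection). This lets you absorb an enumerated Hamel basis of $E$ block by block, so no permutation of coordinates and no minor-selection lemmas are needed; the price is the routine bookkeeping that a block lower triangular matrix with invertible diagonal blocks has finitely many nonzero entries per row, is invertible by forward substitution, and has continuous inverse (alternatively, invoke the open mapping theorem on $\omega$). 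Two small points to tidy up: handle the degenerate case $e_j\in E_{j-1}$ (take any block, say of size one, since the span need not contain anything new), and note that your $\delta_n$ denotes the unit vector while the paper reserves $\delta_n$ for the coordinate functional, so rename one of them to avoid a clash.
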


In order to prove the above result we need few technical lemmas. As
usual, we identify $\omega$ with $\K^\N$. For $n\in\N$, the symbol
$\delta_n$ stands for the $n^{\rm th}$ coordinate functional on
$\omega$. That is, $\delta_n\in\omega'$ is defined by
$\delta_n(x)=x_n$. By $\phi$ we denote the linear subspace of
$\omega$ consisting of sequences with finite support. That is,
$x\in\phi$ precisely when there is $n\in\N$ such that $x_m=0$ for
all $m\geq n$.

\begin{lemma}\label{ome1} Let $f_1,\dots,f_{n+1}$ be linearly
independent functionals on a vector space $E$, $A\subseteq E$ be
such that $\spann(A)=E$ and $x_1,\dots,x_n\in E$ be such that the
matrix $\{f_j(x_k)\}_{1\leq j,k\leq n}$ is invertible. Then there
exists $x_{n+1}\in A$ such that $\{f_j(x_k)\}_{1\leq j,k\leq n+1}$
is invertible.
\end{lemma}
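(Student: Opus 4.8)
The plan is to argue by contradiction. Suppose that for every $x\in A$ the matrix $\{f_j(x_k)\}_{1\leq j,k\leq n+1}$ — where we set $x_{n+1}=x$ — fails to be invertible. Since $\{f_j(x_k)\}_{1\leq j,k\leq n}$ is already invertible, expanding the determinant along the last column shows that the vanishing of $\det\{f_j(x_k)\}_{1\leq j,k\leq n+1}$ is an inhomogeneous linear condition on the vector $(f_1(x),\dots,f_{n+1}(x))$; more precisely, there exist scalars $c_1,\dots,c_{n+1}$, with $c_{n+1}=\det\{f_j(x_k)\}_{1\leq j,k\leq n}\neq 0$, such that $\det\{f_j(x_k)\}_{1\leq j,k\leq n+1}=\sum_{i=1}^{n+1}c_i f_i(x)$. (The $c_i$ for $i\leq n$ are, up to sign, the relevant $n\times n$ minors of the first $n$ columns and are irrelevant for the argument.) Thus the assumption says that the functional $g=\sum_{i=1}^{n+1}c_i f_i$ vanishes on every element of $A$.

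Since $\spann(A)=E$ and $g$ vanishes on $A$, linearity forces $g\equiv 0$ on $E$. But $g=\sum_{i=1}^{n+1}c_i f_i$ with $c_{n+1}\neq 0$, which contradicts the linear independence of $f_1,\dots,f_{n+1}$. Hence there must exist some $x_{n+1}\in A$ for which $\det\{f_j(x_k)\}_{1\leq j,k\leq n+1}\neq 0$, i.e.\ for which this matrix is invertible.

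The only point requiring a little care is the cofactor expansion: one needs to confirm that the coefficient of $f_{n+1}(x)$ in the expansion of the $(n+1)\times(n+1)$ determinant along its last column is exactly the $n\times n$ determinant $\det\{f_j(x_k)\}_{1\leq j,k\leq n}$, which is nonzero by hypothesis; this guarantees that $g$ is a genuinely nontrivial combination of $f_1,\dots,f_{n+1}$ and is what makes the contradiction with linear independence work. Everything else is routine linear algebra, and no completeness or topology of $E$ is used — the statement is purely algebraic.
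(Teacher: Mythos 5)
Your proof is correct and rests on the same key observation as the paper's: the bordered determinant, viewed as a function of $x_{n+1}$, is a linear combination of $f_1,\dots,f_{n+1}$ with the coefficient of $f_{n+1}$ equal to $\det\{f_j(x_k)\}_{1\leq j,k\leq n}\neq 0$, hence a nonzero functional that cannot vanish on the spanning set $A$. The paper merely packages this directly (constructing $g=f_{n+1}-\sum_j c_jf_j$ vanishing at $x_1,\dots,x_n$ and verifying invertibility by row operations) rather than by contradiction via cofactor expansion, so the difference is purely organizational.
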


\begin{proof} Since the matrix $B=\{f_j(x_k)\}_{1\leq j,k\leq n}$ is invertible,
the vector $(f_{n+1}(x_1),\dots,f_{n+1}(x_n))\in\K^n$ is a linear
combination of the rows of $B$. That is, there exist
$c_1,\dots,c_n\in\K$ such that $g(x_j)=0$ for $1\leq j\leq n$, where
$g=f_{n+1}-\sum\limits_{j=1}^n c_jf_j$. Since $f_j$ are linearly
independent, $g\neq 0$. Since $\spann(A)=E$, we can find $x_{n+1}\in
A$ such that $g(x_{n+1})\neq 0$. Consider the $(n+1)\times(n+1)$
matrix $C=\{\gamma_{j,k}\}_{1\leq j,k\leq n+1}$ defined by
$\gamma_{j,k}=f_j(x_k)$ for $1\leq j\leq n$, $1\leq k\leq n+1$ and
$\gamma_{n+1,k}=g(x_k)$ for $1\leq k\leq n+1$. Since
$\{\gamma_{j,k}\}_{1\leq j,k\leq n}=B$ is invertible,
$\gamma_{n+1,k}=g(x_k)=0$ for $1\leq k\leq n$ and
$\gamma_{n+1,n+1}=g(x_{n+1})\neq 0$, $C$ is invertible. Indeed,
$\det C=g(x_{n+1})\det B$. It remains to notice that
$B^+=\{f_j(x_k)\}_{1\leq j,k\leq n+1}$ is obtained from $C$ by
adding a linear combination of the first $n$ rows to the last row.
Hence $\det B^+=\det C\neq 0$ and $B^+$ is invertible as required.
\end{proof}

Applying Lemma~\ref{ome1} and treating the elements of a vector
space $E$ as linear functionals on a space of linear functionals on
$E$, we immediately get the following result.

\begin{lemma}\label{ome2} Let $x_1,\dots,x_{n+1}$ be linearly
independent elements of a vector space $E$, $A$ be a collection of
linear functionals on $E$ separating the points of $E$ and
$f_1,\dots,f_n$ be linear functionals on $E$ such that the matrix
$\{f_j(x_k)\}_{1\leq j,k\leq n}$ is invertible. Then there exists
$f_{n+1}\in A$ such that $\{f_j(x_k)\}_{1\leq j,k\leq n+1}$ is
invertible.
\end{lemma}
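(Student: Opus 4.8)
The plan is to deduce Lemma~\ref{ome2} from Lemma~\ref{ome1} by interchanging the roles of vectors and functionals. Set $\mathcal{E}=\spann(A\cup\{f_1,\dots,f_n\})$ and regard it as an abstract vector space, its elements being certain linear functionals on $E$. Every $x\in E$ induces a linear functional $\widehat{x}$ on $\mathcal{E}$ via $\widehat{x}(h)=h(x)$. I will apply Lemma~\ref{ome1} to the vector space $\mathcal{E}$, with the functionals $\widehat{x}_1,\dots,\widehat{x}_{n+1}$ on $\mathcal{E}$ playing the role of $f_1,\dots,f_{n+1}$, with $A':=A\cup\{f_1,\dots,f_n\}$ playing the role of $A$, and with $f_1,\dots,f_n\in\mathcal{E}$ playing the role of $x_1,\dots,x_n$.

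To do so I must verify the hypotheses of Lemma~\ref{ome1}. The functionals $\widehat{x}_1,\dots,\widehat{x}_{n+1}$ on $\mathcal{E}$ are linearly independent: if $\sum_{j=1}^{n+1}c_j\widehat{x}_j=0$ on $\mathcal{E}$, then $h\bigl(\sum_{j=1}^{n+1}c_jx_j\bigr)=0$ for every $h\in\mathcal{E}$, and since $A\subseteq\mathcal{E}$ separates the points of $E$, we get $\sum_{j=1}^{n+1}c_jx_j=0$, hence all $c_j=0$ by linear independence of $x_1,\dots,x_{n+1}$ in $E$. Next, $\spann(A')=\mathcal{E}$ by construction, and $f_1,\dots,f_n$ are indeed elements of $\mathcal{E}$. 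Finally, the matrix $\{\widehat{x}_j(f_k)\}_{1\leq j,k\leq n}=\{f_k(x_j)\}_{1\leq j,k\leq n}$ is the transpose of the matrix $\{f_j(x_k)\}_{1\leq j,k\leq n}$ given in the hypothesis, and is therefore invertible.

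Lemma~\ref{ome1} then yields $f_{n+1}\in A'$ with $\{\widehat{x}_j(f_k)\}_{1\leq j,k\leq n+1}$ invertible; transposing again, $\{f_j(x_k)\}_{1\leq j,k\leq n+1}$ is invertible, which is the desired conclusion provided $f_{n+1}\in A$. But $f_{n+1}$ cannot equal any $f_i$ with $i\leq n$, for then rows $i$ and $n+1$ of $\{f_j(x_k)\}_{1\leq j,k\leq n+1}$ would coincide, contradicting invertibility; hence $f_{n+1}\in A'\setminus\{f_1,\dots,f_n\}\subseteq A$, and we are done.

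I do not expect any serious obstacle; the statement is essentially formal once the duality is arranged. The only point that is slightly more than bookkeeping is that Lemma~\ref{ome1} requires its distinguished subset to span the whole space, so one cannot directly take $\mathcal{E}=\spann(A)$ unless the $f_j$ already lie in $\spann(A)$. Using $A\cup\{f_1,\dots,f_n\}$ as the spanning set circumvents this, at the (automatic) price of having to rule out the degenerate possibility that the functional produced by Lemma~\ref{ome1} is one of the $f_j$.
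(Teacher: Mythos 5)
Your proof is correct and takes essentially the same route as the paper, which simply remarks that Lemma~\ref{ome2} follows from Lemma~\ref{ome1} by treating the elements of $E$ as linear functionals on a space of linear functionals on $E$. You merely supply the details the paper leaves implicit --- working in $\spann(A\cup\{f_1,\dots,f_n\})$ so that the spanning hypothesis of Lemma~\ref{ome1} holds, and ruling out via the repeated-row argument that the functional produced is one of $f_1,\dots,f_n$ --- and both steps are carried out correctly.
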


\begin{lemma}\label{ome3} Let $\{u_n\}_{n\in\N}$ be a Hamel basis in a
vector space $E$ and $\{f_n\}_{n\in\N}$ be a linearly independent
sequence of linear functionals on $E$ separating the points of $E$.
Then there exist bijections $\alpha:\N\to\N$ and $\beta:\N\to\N$
such that for every $n\in\N$, the matrix
$\{f_{\alpha(j)}(x_{\beta(k)})\}_{1\leq j,k\leq n}$ is invertible.

Furthermore, there exist complex numbers $c_{j,k}$ for $j\leq k$
such that $c_{j,j}\neq 0$ and $f_{\alpha(j)}(v_j)=1$ for $j\in\N$
and $f_{\alpha(j)}(v_k)=0$ for $j,k\in\N$ and $j<k$, where
$v_k=\sum\limits_{m=1}^k c_{m,k}u_{\beta(m)}$.
\end{lemma}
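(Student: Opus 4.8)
The plan is to establish the two assertions in turn; the first one (existence of the bijections $\alpha,\beta$ with all truncated matrices $M_n:=\{f_{\alpha(j)}(u_{\beta(k)})\}_{1\leq j,k\leq n}$ invertible, reading $x_{\beta(k)}$ as $u_{\beta(k)}$) is the substantive part, while the second one (the triangular coefficients $c_{j,k}$) then drops out of Cramer's rule.

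For the first part I would run a back-and-forth construction producing $\alpha(1),\beta(1),\alpha(2),\beta(2),\dots$ one pair at a time so that after $n$ steps $\alpha(1),\dots,\alpha(n)$ are pairwise distinct, $\beta(1),\dots,\beta(n)$ are pairwise distinct, and $M_n$ is invertible. The base step is trivial: since $f_1\neq0$ and $\{u_m\}$ spans $E$, pick the least $m$ with $f_1(u_m)\neq0$ and put $\alpha(1)=1$, $\beta(1)=m$. For the inductive step I alternate between two moves. On an \emph{$\alpha$-move} I set $\alpha(n+1)=\min(\N\setminus\{\alpha(1),\dots,\alpha(n)\})$ and apply Lemma~\ref{ome1} to the linearly independent functionals $f_{\alpha(1)},\dots,f_{\alpha(n+1)}$, to the spanning set $\{u_m:m\in\N\}$, and to the vectors $u_{\beta(1)},\dots,u_{\beta(n)}$, obtaining $u_{\beta(n+1)}$ for which $M_{n+1}$ is invertible. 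On a \emph{$\beta$-move} I set $\beta(n+1)=\min(\N\setminus\{\beta(1),\dots,\beta(n)\})$ and apply Lemma~\ref{ome2} to the linearly independent vectors $u_{\beta(1)},\dots,u_{\beta(n+1)}$ (distinct members of a Hamel basis), to the point-separating family $\{f_m:m\in\N\}$, and to the functionals $f_{\alpha(1)},\dots,f_{\alpha(n)}$, obtaining $f_{\alpha(n+1)}$ with $M_{n+1}$ invertible.

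The point that needs checking, and this is where the main work lies, is that on an $\alpha$-move the index $\beta(n+1)$ thus produced is automatically \emph{new}, and dually on a $\beta$-move the index $\alpha(n+1)$ is new, so that the two sequences stay injective. This is immediate from the proofs of the two lemmas: in the proof of Lemma~\ref{ome1} the chosen $x_{n+1}$ satisfies $g(x_{n+1})\neq0$ for a functional $g$ vanishing on $x_1,\dots,x_n$, so $x_{n+1}$ differs from all of them, and Lemma~\ref{ome2} is proved by the dual argument. Doing $\alpha$-moves on the even steps and $\beta$-moves on the odd steps (after the base step), both $\alpha$ and $\beta$ are then injective, and they are surjective by the standard back-and-forth argument: were $m$ the least natural number missing from the range of $\alpha$, then once all of $1,\dots,m-1$ have entered the range the next $\alpha$-move would be forced to pick $m$, a contradiction; symmetrically for $\beta$. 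Hence $\alpha,\beta$ are bijections and $M_n$ is invertible for every $n$.

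For the second part, with $\alpha,\beta$ fixed, for each $k$ I look for $v_k=\sum_{m=1}^k c_{m,k}u_{\beta(m)}$ with $f_{\alpha(k)}(v_k)=1$ and $f_{\alpha(j)}(v_k)=0$ for $1\leq j<k$. Spelled out, these $k$ conditions form the linear system $M_k\,(c_{1,k},\dots,c_{k,k})^{\top}=e_k$ with $e_k$ the $k$-th standard basis vector of $\K^k$; since $M_k$ is invertible the coefficients exist and are uniquely determined. By cofactor expansion along the last row and column, $c_{k,k}$ is the $(k,k)$ entry of $M_k^{-1}$, i.e. $\det M_{k-1}/\det M_k$ (with $\det M_0=1$), which is nonzero because $M_{k-1}$ and $M_k$ are invertible. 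This yields exactly the asserted relations, finishing the proof. In short, the only real obstacle is the realization that Lemma~\ref{ome1} prescribes the new functional and chooses the new vector while Lemma~\ref{ome2} does the reverse, so alternating them forces surjectivity of $\alpha$ along one half of the steps and of $\beta$ along the other without ever spoiling invertibility of the finite matrices; everything else is bookkeeping and elementary linear algebra.
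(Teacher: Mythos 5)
Your proposal is correct and follows essentially the same route as the paper: the same alternating back-and-forth construction using Lemmas~\ref{ome1} and~\ref{ome2} with the ``least unused index'' rule to force surjectivity of $\alpha$ and $\beta$ (injectivity coming from invertibility of the truncated matrices), followed by solving the triangular linear system for the coefficients $c_{j,k}$ with $c_{k,k}\neq 0$ from invertibility of the corner matrices. The only differences are bookkeeping ones (the paper adds two indices per induction step and tracks the coverage condition explicitly, and argues $c_{k,k}\neq0$ via columns of $A_{k-1}$ rather than Cramer's rule), so no comment beyond this is needed.
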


\begin{proof} We shall construct inductively two sequences
$\{\alpha_j\}_{j\in\N}$ and $\{\beta_k\}_{k\in\N}$ of natural
numbers such that
\begin{align}
\{1,\dots,n\}\subseteq\{\alpha_1,\dots,\alpha_{2n}\}\cap\{\beta_1,\dots,\beta_{2n}\}\
\ \text{for each $n\in\N$};\label{inin1}
\\
\text{$\{f_{\alpha_j}(x_{\beta_k})\}_{1\leq j,k\leq n}$ is
invertible for every $n\in\N$.}\label{inin2}
\end{align}

{\bf Basis of induction.} \ Take $\alpha_1=1$. Since $f_1\neq 0$ and
the vectors $u_n$ span $E$, there is $\beta_1\in\N$ such that
$f_{\alpha_1}(u_{\beta_1})\neq 0$. Now we take
$\beta_2=\min(\N\setminus\{\beta_1\})$. By Lemma~\ref{ome2}, there
is $\alpha_2\in\N$ such that $\{f_{\alpha_j}(x_{\beta_k})\}_{1\leq
j,k\leq 2}$ is invertible. Clearly, $1\in\{\alpha_1,\alpha_2\}\cap
\{\beta_1,\beta_2\}$. Thus $\alpha_1,\alpha_2,\beta_1,\beta_2$
satisfy (\ref{inin1}) and (\ref{inin2}).

{\bf The induction step.} \ Assume that $m\in\N$ and
$\alpha_1,\dots,\alpha_{2m},\beta_1,\dots,\beta_{2m}$ satisfying
(\ref{inin1}) with $n\leq m$ and (\ref{inin2}) with $n\leq 2m$ are
already constructed. The latter implies that $\beta_j$ are pairwise
distinct and $\alpha_j$ are pairwise distinct. First, take
$\alpha_{2m+1}=\min(\N\setminus\{\alpha_1,\dots,\alpha_{2m}\})$. By
Lemma~\ref{ome1}, there is $\beta_{2m+1}\in\N$ such that
$\{f_{\alpha_j}(x_{\beta_k})\}_{1\leq j,k\leq 2m+1}$ is invertible.
Automatically, $\beta_{m+1}\notin\{\beta_1,\dots,\beta_{2m}\}$.
Next, we take
$\beta_{2m+2}=\min(\N\setminus\{\beta_1,\dots,\beta_{2m+1}\})$. By
Lemma~\ref{ome2}, there is $\alpha_{2m+2}\in\N$ such that
$\{f_{\alpha_j}(x_{\beta_k})\}_{1\leq j,k\leq 2m+2}$ is invertible.
Since
$\{1,\dots,m\}\subseteq\{\alpha_1,\dots,\alpha_{2m}\}\cap\{\beta_1,\dots,\beta_{2m}\}$,
$\alpha_{2m+1}=\min(\N\setminus\{\alpha_1,\dots,\alpha_{2m}\})$ and
$\beta_{2m+2}=\min(\N\setminus\{\beta_1,\dots,\beta_{2m+1}\})$, we
have
$\{1,\dots,m+1\}\subseteq\{\alpha_1,\dots,\alpha_{2m+2}\}\cap\{\beta_1,\dots,\beta_{2m+2}\}$.
Thus $\alpha_1,\dots,\alpha_{2m+2},\beta_1,\dots,\beta_{2m+2}$
satisfy (\ref{inin1}) with $n\leq m+1$ and (\ref{inin2}) with $n\leq
2m+2$.

This concludes the inductive construction of $\{\alpha_j\}_{j\in\N}$
and $\{\beta_k\}_{k\in\N}$ satisfying (\ref{inin1}) and
(\ref{inin2}). According to (\ref{inin2}), $\alpha_j$ are pairwise
distinct and $\beta_j$ are pairwise distinct. By (\ref{inin1}),
$\{\alpha_j:j\in\N\}=\{\beta_k:k\in\N\}=\N$. Hence the maps
$\alpha,\beta:\N\to\N$ defined by $\alpha(j)=\alpha_j$ and
$\beta(j)=\beta_j$ are bijections. By (\ref{inin2}), the matrix
$\{f_{\alpha(j)}(x_{\beta(k)})\}_{1\leq j,k\leq n}$ is invertible
for every $n\in\N$.

Now let $m\in\N$. Since $A_m=\{f_{\alpha(j)}(x_{\beta(k)})\}_{1\leq
j,k\leq m}$ is invertible, we can find
$c_{1,m},c_{2,m},\dots,c_{m,m}$ such that the linear combination of
the columns of $A_m$ with the coefficients $c_{1,m},\dots,c_{m,m}$
is the vector $(0,\dots,0,1)$. Note that $c_{m,m}$ can not be $0$.
Indeed, otherwise a non-trivial linear combination of the columns of
the invertible matrix $A_{m-1}$ is $0$. The fact that the linear
combination of the columns of $A_m$ with the coefficients
$c_{1,m},\dots,c_{m,m}$ is $(0,\dots,0,1)$ can be rewritten as
$f_{\alpha(m)}(v_m)=1$ and $f_{\alpha(j)}(v_m)=0$ for $j<m$, where
$v_m=\sum\limits_{j=1}^m c_{j,m}u_{\beta(j)}$. Doing this for every
$m\in\N$, we obtain the numbers $\{c_{j,m}\}$ and the vectors $v_m$
satisfying all desired conditions.
\end{proof}

\begin{lemma}\label{wow} Let $E$  be a dense countably dimensional
linear subspace of $\omega$. Then there is a Hamel basis
$\{v_n\}_{n\in\N}$ in $E$ and a bijection $\alpha:\N\to\N$ such that
$\delta_{\alpha(n)}(v_n)=1$ and $\delta_{\alpha(k)}(v_n)=0$ whenever
$n\in\N$ and $k<n$.
\end{lemma}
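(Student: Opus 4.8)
The plan is to deduce Lemma~\ref{wow} directly from Lemma~\ref{ome3}, applied to the space $E$ itself with an arbitrary Hamel basis and the coordinate functionals. First I would fix a Hamel basis $\{u_n\}_{n\in\N}$ of $E$, which exists because $E$ is countably dimensional. Next I would verify that the restrictions $f_n=\delta_n|_E$ form a linearly independent sequence of linear functionals on $E$ separating the points of $E$. Separation is immediate since the $\delta_n$ already separate points of $\omega\supseteq E$. Linear independence is where the density hypothesis enters: any nontrivial finite linear combination $\sum_j c_j\delta_{n_j}$ is a nonzero element of $\omega'$, hence cannot vanish on the dense subspace $E$, so the $f_n$ are linearly independent. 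This checks all hypotheses of Lemma~\ref{ome3}.

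Applying Lemma~\ref{ome3} with these $u_n$ and $f_n$, I obtain bijections $\alpha,\beta:\N\to\N$ and scalars $c_{j,k}$ for $j\leq k$ with $c_{j,j}\neq 0$ such that the vectors $v_k=\sum_{m=1}^k c_{m,k}u_{\beta(m)}\in E$ satisfy $\delta_{\alpha(j)}(v_j)=1$ for all $j$ and $\delta_{\alpha(j)}(v_k)=0$ for $j<k$. Relabeling the indices $j,k$ as $k,n$, the second relation reads $\delta_{\alpha(k)}(v_n)=0$ for $k<n$, and the first reads $\delta_{\alpha(n)}(v_n)=1$; these are exactly the conditions in the statement, with $\alpha$ already a bijection. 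It therefore only remains to check that $\{v_n\}_{n\in\N}$ is a Hamel basis of $E$.

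For this I would exploit that the passage from the reordered basis $\{u_{\beta(m)}\}$ to $\{v_k\}$ is triangular with nonzero diagonal. By induction on $k$, using $u_{\beta(k)}=c_{k,k}^{-1}\bigl(v_k-\sum_{m<k}c_{m,k}u_{\beta(m)}\bigr)$, one gets $\spann\{v_1,\dots,v_k\}=\spann\{u_{\beta(1)},\dots,u_{\beta(k)}\}$ for every $k$; taking the union over $k$ and using that $\beta$ is a bijection gives $\spann\{v_n:n\in\N\}=\spann\{u_n:n\in\N\}=E$. For linear independence, suppose $\sum_k d_kv_k=0$ is a nontrivial finite relation and let $N$ be the largest index with $d_N\neq 0$; then the coefficient of $u_{\beta(N)}$ in $\sum_k d_kv_k$ equals $d_N c_{N,N}\neq 0$, a contradiction. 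Hence $\{v_n\}$ is a Hamel basis of $E$ and the lemma follows. The only point genuinely requiring the hypotheses rather than routine bookkeeping is the linear independence of the restricted coordinate functionals, which is precisely where density of $E$ in $\omega$ is used.
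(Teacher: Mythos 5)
Your proof is correct and follows essentially the same route as the paper: apply Lemma~\ref{ome3} to an arbitrary Hamel basis of $E$ with $f_n=\delta_n$, then use the triangular form of the $v_k$ with nonzero diagonal coefficients to conclude that $\{v_n\}$ is again a Hamel basis. Your explicit verification that the restricted coordinate functionals are linearly independent on $E$ (via density) and separate its points is exactly the point the paper leaves implicit, so nothing is missing.
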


\begin{proof} Take an arbitrary Hamel basis $\{u_n\}_{n\in\N}$ in
$E$. Applying Lemma~\ref{ome3} with $f_n=\delta_n$, we find
bijections $\alpha,\beta:\N\to\N$ and complex numbers $c_{j,k}$ for
$j\leq k$ such that $c_{j,j}\neq 0$ and $f_{\alpha(j)}(v_j)=1$ for
$j\in\N$ and $f_{\alpha(j)}(v_k)=0$ for $j,k\in\N$ and $j<k$, where
$v_k=\sum\limits_{m=1}^k c_{m,k}u_{\beta(m)}$.

It remains to notice that since $\{u_n\}$ is a Hamel basis in $E$,
$\{v_n\}$ is also a Hamel basis in $E$. Indeed, it is
straightforward to verify that
$u_{\beta(n)}\in\spann\{v_1,\dots,v_n\}\setminus\spann\{v_1,\dots,v_{n-1}\}$
for every $n\in\N$. Thus the Hamel basis $\{u_n\}$ and the bijection
$\alpha$ satisfy all desired conditions.
\end{proof}

\begin{proof}[{\bf Proof of Theorem~\ref{omeg}}] Let $E$ be a dense
countably dimensional subspace of $\omega$. By Lemma~\ref{wow},
there is a Hamel basis $\{v_n\}_{n\in\N}$ in $E$ and a bijection
$\alpha:\N\to\N$ such that $\delta_{\alpha(n)}(v_n)=1$ and
$\delta_{\alpha(k)}(v_n)=0$ whenever $n\in\N$ and $k<n$. Consider
$T:\omega\to\omega$ defined by the formula
$$
Tx=\sum_{n=1}^\infty x_{\alpha(n)}v_n=\sum_{n=1}^\infty
\delta_{\alpha(n)}(x)v_n.
$$
If $\{e_j\}_{j\in\N}$ is the standard basis of $\omega$, then it is
easy to see that the matrix of $T$ with respect to the 'shuffled'
basis $\{e_{\alpha(j)}\}_{j\in\N}$ is lower-triangular with all
entries $1$ on the main diagonal. It follows that $T$ is a
well-defined invertible continuous linear operator on $\omega$. It
remains to observe that $T(\phi)=E$. Hence each dense countably
dimensional subspace of $\omega$ is the image of $\phi$ under an
isomorphism of $\omega$ onto itself. Hence isomorphisms of $\omega$
act transitively on the set of dense countably dimensional linear
subspaces of $\omega$.
\end{proof}

\section{Proof of Theorem~\ref{mama} \label{s4}}

\begin{lemma}\label{l11} Let $\{x_n\}_{n\in\Z_+}$ be a sequence
in a sequentially complete locally convex space $X$ such that
$x_n\to 0$. Then the set
$$
K=\biggl\{\sum_{n=0}^\infty a_nx_n:a\in\ell_1,\ \|a\|_1\leq
1\biggr\}
$$
is a Banach disk. Moreover, $E=\spann\{x_n:n\in \Z_+\}$ is a dense
linear subspace of the Banach space $X_K$.
\end{lemma}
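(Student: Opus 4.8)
The plan is to realize the normed space $X_K$ as an isometric quotient of $\ell_1$. Consider the linear map $\Phi\colon\ell_1\to X$ given by $\Phi(a)=\sum_{n=0}^\infty a_nx_n$. First I would check that $\Phi$ is well defined: since $x_n\to0$, the sequence $\{x_n\}$ is bounded, so for every continuous seminorm $p$ on $X$ the number $M_p=\sup_{n}p(x_n)$ is finite, and then for $m<k$ we have $p\bigl(\sum_{n=m}^{k}a_nx_n\bigr)\le M_p\sum_{n=m}^{k}|a_n|$; this shows the partial sums of $\sum a_nx_n$ form a Cauchy sequence in $X$, which converges by sequential completeness. The same estimate gives $p(\Phi(a))\le M_p\|a\|_1$ for all $a\in\ell_1$ and all continuous seminorms $p$, so $\Phi$ is continuous; in particular $\ker\Phi$ is a closed linear subspace of $\ell_1$ because $X$ is Hausdorff.

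Next I would express $K$ and $X_K$ through $\Phi$. By definition $K=\Phi(B)$, where $B$ is the closed unit ball of $\ell_1$. As the image of a convex balanced set under a linear map, $K$ is convex and balanced, and the estimate $p(\Phi(a))\le M_p$ for $\|a\|_1\le 1$ shows $K$ is bounded; hence $K$ is a disk. Moreover $\spann K=\Phi(\ell_1)=X_K$ as sets, since each $\Phi(a)$ lies in $\|a\|_1K$. Because $K$ is bounded and $X$ is Hausdorff, $\bigcap_{t>0}tK=\{0\}$, so $p_K$ is actually a norm.

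The heart of the argument is the identity
$$
p_K(\Phi(a))=\inf\{\,t>0:\Phi(a)\in tK\,\}=\inf\{\,\|b\|_1:b\in\ell_1,\ \Phi(b)=\Phi(a)\,\}\qquad(a\in\ell_1),
$$
where the second equality holds because $\Phi(a)\in tK$ exactly when $\Phi(a)=\Phi(b)$ for some $b$ with $\|b\|_1\le t$. The right-hand side is precisely the quotient norm of $a+\ker\Phi$ in $\ell_1/\ker\Phi$, so the well-defined linear bijection $\ell_1/\ker\Phi\to X_K$, $a+\ker\Phi\mapsto\Phi(a)$, is an isometry. Since $\ker\Phi$ is closed, $\ell_1/\ker\Phi$ is a Banach space, and therefore so is $X_K$; that is, $K$ is a Banach disk. (One could instead verify directly that $K$ is sequentially complete and invoke the cited fact, but checking closedness of $K$ in $X$ seems less convenient than the quotient description.)

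For the ``moreover'' part, note that $x_n=\Phi(e_n)$ with $e_n$ the $n$-th standard unit vector of $\ell_1$, so $x_n\in K\subseteq X_K$ and hence $E=\spann\{x_n:n\in\Z_+\}$ is a linear subspace of $X_K$. Given $x=\Phi(a)\in X_K$, the partial sums $s_N=\sum_{n=0}^{N}a_nx_n=\Phi(a^{(N)})$, with $a^{(N)}$ the truncation of $a$ after index $N$, lie in $E$ and satisfy $p_K(x-s_N)\le\|a-a^{(N)}\|_1=\sum_{n>N}|a_n|\to 0$, so $E$ is dense in $X_K$. The one step that takes real care is the identification of $p_K\circ\Phi$ with the $\ell_1/\ker\Phi$ quotient norm; once $\Phi$ is set up, the rest is routine.
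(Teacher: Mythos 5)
Your argument is correct, but it takes a genuinely different route from the paper. You work with $\Phi\colon\ell_1\to X$ as a map from the whole of $\ell_1$ with its norm topology, observe $K=\Phi(B)$, and then prove completeness of $X_K$ directly by identifying $p_K\circ\Phi$ with the quotient norm of $\ell_1/\ker\Phi$ (the verification of $p_K(\Phi(a))=\inf\{\|b\|_1:\Phi(b)=\Phi(a)\}$, including the attainment issue, is the one delicate step, and you handle it correctly; closedness of $\ker\Phi$ follows from continuity of $\Phi$ and the Hausdorff property, and your boundedness estimate makes $p_K$ a genuine norm). The paper instead restricts $\Phi$ to the unit ball $Q$ of $\ell_1$ equipped with the topology of coordinatewise convergence, shows by an $\epsilon/3$ estimate (using $x_n\to0$) that $\Phi$ is continuous on the compact metrizable set $Q$, concludes that $K$ is a compact disk, and then invokes the cited fact that every sequentially complete (in particular compact) disk is a Banach disk. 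So the paper's proof buys more structural information — $K$ is actually compact and metrizable — at the cost of relying on that external fact and on a slightly more delicate continuity argument, whereas your proof is self-contained, uses only the trivial continuity of $\Phi$ on $\ell_1$, and yields an explicit isometric description $X_K\cong\ell_1/\ker\Phi$, but it does not show compactness of $K$ (which, to be fair, is not needed elsewhere in the paper). The density argument for $E$ via truncation of $a\in\ell_1$ is the same in both proofs.
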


\begin{proof} Let $Q=\{a\in \ell_1:\|a\|_1\leq 1\}$ be endowed with
the coordinatewise convergence topology. Then $Q$ is metrizable and
compact as a closed subspace of the compact metrizable space
$\D^{\Z_+}$, where $\D=\{z\in\K:|z|\leq 1\}$. Obviously, the map
$\Phi:Q\to K$, $\Phi(a)=\sum\limits_{n=0}^\infty a_nx_n$ is onto.
Moreover, $\Phi$ is continuous. Indeed, let $p$ be a continuous
seminorm on $X$, $a\in Q$ and $\epsilon>0$. Since $x_n\to 0$, there
is $m\in\Z_+$ such that $p(x_n)\leq \epsilon$ for $n>m$. Set
$\delta=\frac{\epsilon}{1+p(x_0)+{\dots}+p(x_m)}{\vrule width0pt
height0pt depth6pt}$ and $W=\{b\in Q:|a_j-b_j|<\delta\ \ \text{for}\
\ 0\leq j\leq m\}$. Then $W$ is a neighborhood of $a$ in $Q$ and for
each $b\in W$, we have
$$
p(\Phi(b)-\Phi(a))=p\biggl(\sum_{n=0}^\infty (b_n-a_n)x_n\biggr)\leq
\sum_{n=0}^\infty |b_n-a_n|p(x_n).
$$
Since $p(x_n)<\epsilon$ for $n>m$, $|a_n-b_n|<\delta$ for $n\leq m$
and $\|a\|_1\leq 1$, $\|b\|_1\leq 1$, we obtain
$$
p(\Phi(b)-\Phi(a))\leq \delta\sum_{n=0}^m
p(x_m)+\epsilon\sum_{n=m+1}^\infty |b_n-a_n|\leq
2\epsilon+\delta\sum_{n=0}^m p(x_m).
$$
Using the definition of $\delta$, we see that
$p(\Phi(b)-\Phi(a))\leq 3\epsilon$. Since $a$, $p$ and $\epsilon$
are arbitrary, $\Phi$ is continuous. Thus $K$ is compact and
metrizable as a continuous image of a compact metrizable space.
Obviously, $K$ is convex and balanced. Hence $K$ is a Banach disk
(any compact disk is a Banach disk). It remains to show that $E$ is
dense in $X_K$. Take $u\in X_K$. Then there is $a\in \ell_1$ such
that $u=\sum\limits_{k=0}^\infty a_kx_k$. Clearly,
$u_n=\sum\limits_{k=0}^n a_kx_k\in E$. Then
$p_K(u-u_n)=p_K\Bigl(\sum\limits_{k=n+1}^\infty
a_kx_k\Bigr)\leq\sum\limits_{k=n+1}^\infty |a_k|\to 0$ as
$n\to\infty$. Hence $E$ is dense in $X_K$.
\end{proof}

\begin{lemma}\label{l22} Let $X$ be a Fr\'echet space and
$A$ and $B$ be dense countable subsets of $X$. Then there exists a
Banach disk $D$ in $X$ such that both $A$ and $B$ are dense subsets
of the Banach space $(X_D,p_D)$.
\end{lemma}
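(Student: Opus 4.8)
The plan is to produce a single Banach disk by applying Lemma~\ref{l11} to a well-chosen null sequence in $X$ --- one carrying $A$ and $B$ along with enough small ``correction'' vectors to force the required density. Since $X$ is metrizable, fix a decreasing base $\{U_n\}_{n\in\N}$ of neighbourhoods of $0$ in $X$. Let $\qq_0$ be a countable dense subring of $\K$ ($\qq_0=\qq$ if $\K=\R$ and $\qq_0=\qq+i\qq$ if $\K=\C$), and let $C$ be the countable set of all finite $\qq_0$-linear combinations of elements of $A\cup B$; thus $A\cup B\subseteq C$, and $C$ is stable under finite $\qq_0$-linear combinations. Enumerate $C=\{v_j:j\in\N\}$.

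Next I would build a countable set $S\subseteq X$ out of three families. First, for each $j\in\N$ fix $k_j$ with $2^{-k}v_j\in U_j$ for all $k\geq k_j$ (possible since $2^{-k}v_j\to0$) and put every vector $2^{-k}v_j$, $k\geq k_j$, into $S$; this forces $v_j=2^{k_j}(2^{-k_j}v_j)\in\spann S$, hence $A\cup B\subseteq C\subseteq\spann S$. Second, for each $j,k\in\N$, using density of $A$ in $X$ pick $a_{j,k}\in A\cap(v_j+2^{-k}U_{j+k})$ and put $y_{j,k}:=2^k(a_{j,k}-v_j)\in U_{j+k}$ into $S$. Third, symmetrically pick $b_{j,k}\in B\cap(v_j+2^{-k}U_{j+k})$ and put $z_{j,k}:=2^k(b_{j,k}-v_j)\in U_{j+k}$ into $S$. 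Since the $U_n$ are decreasing, a given $U_N$ contains every $2^{-k}v_j$ with $j\geq N$ and every $y_{j,k},z_{j,k}$ with $j+k\geq N$; the remaining vectors of $S$ form finitely many sequences tending to $0$ together with finitely many individual vectors, so $U_N$ misses only finitely many elements of $S$. Hence the infinite set $S$ can be listed as a sequence $\{x_m\}_{m\in\Z_+}$ with $x_m\to0$ in $X$. Applying Lemma~\ref{l11} to $\{x_m\}$ we obtain a Banach disk $D$ with $x_m\in D$ (so $p_D(x_m)\leq1$) for every $m$ and with $E:=\spann\{x_m:m\in\Z_+\}$ dense in the Banach space $(X_D,p_D)$.

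It remains to verify three things. (i) $A\cup B\subseteq X_D$: every $a\in A$ equals some $v_j\in C$, and $v_j\in\spann S=E\subseteq X_D$; similarly for $B$. (ii) $C$ is $p_D$-dense in $X_D$: inspecting the three families (and using $v_j\in C$ and $2^{\pm k}\in\qq_0$) one checks that every $x_m$ lies in $C$, so $C$ contains every $\qq_0$-linear combination of the $x_m$; these are $p_D$-dense in $E$ (approximate $\K$-coefficients by $\qq_0$-ones, using that each $p_D(x_m)$ is finite), and $E$ is $p_D$-dense in $X_D$. (iii) $C$ lies in the $X_D$-closures of both $A$ and $B$: fix $j$; since $y_{j,k}$ is one of the $x_m$, $p_D(a_{j,k}-v_j)=2^{-k}p_D(y_{j,k})\leq2^{-k}\to0$, so $a_{j,k}\to v_j$ in $X_D$ with $a_{j,k}\in A$, and likewise $b_{j,k}\to v_j$ in $X_D$ with $b_{j,k}\in B$. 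Combining (ii) and (iii), the $X_D$-closures of $A$ and of $B$ each contain the $X_D$-closure of $C$, which equals $X_D$; thus $A$ and $B$ are both dense in $(X_D,p_D)$, and $D$ is the desired Banach disk.

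I expect the delicate point to be the balancing of scales in the definition of $S$: the correction vectors must be small enough in the topology of $X$ for $\{x_m\}$ to remain a null sequence --- whence the conditions $y_{j,k},z_{j,k}\in U_{j+k}$ and the choice of the $k_j$ --- while the factor $2^k$ must be absorbed into $y_{j,k}$ rather than into $a_{j,k}-v_j$, so that the bound $p_D(x_m)\leq1$ furnished by Lemma~\ref{l11} automatically yields $p_D(a_{j,k}-v_j)\leq2^{-k}$. This is precisely what upgrades density of $A$ in the topology of $X$ to density of $A$ in the strictly finer space $X_D$; the other steps are routine checks of the displayed inclusions.
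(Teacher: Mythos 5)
Your proof is correct and follows essentially the same route as the paper: both arguments assemble a single null sequence from rescaled elements of the rational span $C$ of $A\cup B$ (guaranteeing $A,B\subseteq X_D$ and $p_D$-density of $C$) together with rescaled differences $2^k(a_{j,k}-v_j)$, $2^k(b_{j,k}-v_j)$ that, after applying Lemma~\ref{l11} and the bound $p_D(x_m)\leq 1$, upgrade density of $A$ and $B$ in $X$ to $p_D$-density in $X_D$. The only difference is bookkeeping: the paper indexes the approximants by a single map $f:\N\to C$ with infinite fibres and adds separately rescaled copies $\gamma_m x_m$, $\gamma_m y_m$ of $A$ and $B$, whereas you use doubly indexed approximants and rescaled copies $2^{-k}v_j$ of $C$ itself.
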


\begin{proof} Let $C$ be the set of all linear combinations of the
elements of $A\cup B$ with rational coefficients. Obviously, $C$ is
countable. Pick a map $f:\N\to C$ such that $f^{-1}(x)$ is an
infinite subset of $\N$ for every $x\in C$. Since $A$ and $B$ are
dense in $X$, we can find maps $\alpha:\N\to A$ and $\beta:\N\to B$
such that $4^m(f(m)-\alpha(m))\to 0$ and $4^m(f(m)-\beta(m))\to 0$.
Since $A$ and $B$ are countable, we can write $A=\{x_{m}:m\in\N\}$
and $B=\{y_{m}:m\in\N\}$. Using metrizability of $X$, we can find a
sequence $\{\gamma_{m}\}_{m\in\N}$ of positive numbers such that
$\gamma_{m} x_{m}\to 0$ and $\gamma_{m} y_{m}\to 0$. Enumerating the
countable set
$$
\{2^m(f(m)-\alpha(m)):m\in\N\}\cup \{2^m(f(m)-\beta(m)):m\in\N\}\cup
\{\gamma_{m} x_{m}:m\in\N\}\cup \{\gamma_{m} y_{m}:m\in\N\}
$$
as one (convergent to 0) sequence and applying Lemma~\ref{l11} to
this sequence, we find that there is a Banach disk $D$ in $X$ such
that $X_D$ contains $A$ and $B$, the linear span of $A\cup B$ is
$p_D$-dense in $X_D$ and $f(m)-\alpha(m)\to 0$ and $f(m)-\beta(m)\to
0$ in $X_D$. The $p_D$-density of the linear span of $A\cup B$ in
$X_D$ implies the $p_D$-density of $C$ in $X_D$. Taking into account
that $f^{-1}(x)$ is infinite for every $x\in C$ and that $\alpha$
takes values in $A$, the $p_D$-density of $C$ in $X_D$ and the
relation $\pd(f(m)-\alpha(m))\to 0$ implies that $A$ is $p_D$-dense
in $X_D$. Similarly, $B$ is $p_D$-dense in $X_D$. Thus $D$ satisfies
all required conditions.
\end{proof}

\begin{lemma}\label{lile} Let $X$ be a separable Fr\'echet space and
$p$ be a non-trivial continuous seminorm on $X$. Then for every
dense countable set $A\subset X$, there is $B\subseteq A$ such that
$B$ is $p$-independent and dense in $X$.
\end{lemma}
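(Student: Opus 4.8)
The plan is to build $B=\{b_k:k\in\N\}$ one element at a time, using a fixed countable base of the topology of $X$ to force density and a dimension count to preserve $p$-independence.

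First I would record the preliminary facts that make the scheme work. Since $p$ is continuous, $\ker p$ is a closed subspace of $X$, so the quotient $E=X/\ker p$, with its quotient topology, is a Hausdorff topological vector space; let $\pi:X\to E$ be the quotient map. Non-triviality of $p$ means precisely that $E$ is infinite dimensional. Hence, if $F$ is any finite dimensional subspace of $E$, then $F$ is a \emph{proper} closed subspace of $E$ (finite dimensional subspaces of a Hausdorff topological vector space are closed), and therefore $\pi^{-1}(F)$ is a proper closed subspace of $X$. A proper closed subspace of a topological vector space has empty interior (otherwise it would contain a neighbourhood of $0$ and, being absorbing, all of $X$), so $\pi^{-1}(F)$ is nowhere dense in $X$. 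This is the one genuinely load-bearing observation; the rest is bookkeeping, and it is here that both the continuity and the non-triviality of $p$ are used.

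Since $X$ is separable and metrizable it is second countable; fix a countable base $\{U_k\}_{k\in\N}$ of non-empty open subsets of $X$. I would then choose $b_1,b_2,\dots\in A$ inductively so that $b_k\in U_k$ and $\pi(b_1),\dots,\pi(b_k)$ are linearly independent in $E$ for every $k$. Assume $b_1,\dots,b_{k-1}$ have been picked (nothing is picked when $k=1$), and put $F_{k-1}=\spann\{\pi(b_1),\dots,\pi(b_{k-1})\}$, a finite dimensional subspace of $E$ (with $F_0=\{0\}$). By the preliminary observation, $\pi^{-1}(F_{k-1})$ is nowhere dense, so $U_k\setminus\pi^{-1}(F_{k-1})$ is a non-empty open subset of $X$; as $A$ is dense in $X$, I may choose $b_k\in A$ lying in it. Then $\pi(b_k)\notin F_{k-1}$, which simultaneously yields the linear independence of $\pi(b_1),\dots,\pi(b_k)$ and the fact that $b_k$ is distinct from $b_1,\dots,b_{k-1}$.

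Finally, set $B=\{b_k:k\in\N\}$. Then $B\subseteq A$, $B$ is infinite, and $B$ meets every member of the base $\{U_k\}$, hence $B$ is dense in $X$. For $p$-independence, take pairwise distinct $a_1,\dots,a_n\in B$ and non-zero scalars $z_1,\dots,z_n$; these $a_i$ appear in the sequence $(b_k)$, and since the whole sequence $(\pi(b_k))_{k\in\N}$ is linearly independent by construction, $\sum_i z_i\pi(a_i)\neq 0$, that is, $p\bigl(\sum_i z_i a_i\bigr)\neq 0$. Thus $B$ is $p$-independent and dense in $X$, as required. I do not expect any serious obstacle beyond correctly isolating the nowhere-density of $\pi^{-1}(F_{k-1})$.
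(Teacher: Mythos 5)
Your proof is correct and follows essentially the same route as the paper: an inductive selection of $b_k\in A\cap U_k$ along a countable base, avoiding the proper closed subspace $\spann\{b_1,\dots,b_{k-1}\}+\ker p$ (which you describe equivalently as $\pi^{-1}(F_{k-1})$ in the quotient $X/\ker p$). The only cosmetic difference is that the paper justifies closedness via ``closed subspace plus finite dimensional subspace is closed'' rather than passing to the quotient, and phrases the selection step as ``a proper closed subspace cannot contain $A\cap U_n$'' instead of nowhere density; the substance is identical.
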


\begin{proof} Let $\{U_n\}_{n\in\N}$ be a countable basis of the
topology of $X$. We shall construct (inductively) a sequence
$\{x_n\}_{n\in\N}$ of elements of $A$ such that for every $n\in\N$,
\begin{equation}\label{in1}
\text{$x_n\in U_n$\ \ and\ \ $x_1,\dots,x_n$ are $p$-independent.}
\end{equation}

Note that in every topological vector space, the linear span of a
dense subset of a non-empty open set is a dense linear subspace. It
follows that for each $n\in\N$,
\begin{equation}\label{gn}
\text{a proper closed linear subspace of $X$ can not contain $A\cap
U_n$.}
\end{equation}
Hence $A\cap U_1\not\subseteq \ker p$. Thus we can pick $x_1\in
(A\cap U_1)\setminus \ker p$, which will serve as the basis of
induction. Assume now that $m\in\N$ and $x_1,\dots,x_m$ satisfying
(\ref{in1}) for $n\leq m$ are already constructed. Let $L$ be the
linear span of $x_1,\dots,x_m$. Since the sum of a closed subspace
of a topological vector space and a finite dimensional subspace is
always closed and the codimension of $\ker p$ in $X$ is infinite,
$L+\ker p$ is a proper closed linear subspace of $X$. By (\ref{gn}),
we can pick $x_{m+1}\in (A\cap U_{m+1})\setminus (\ker p+L)$. It is
easy to see that $x_1,\dots,x_m,x_{m+1}$ satisfy (\ref{in1}) for
$n\leq m+1$, which concludes the inductive construction of
$\{x_n\}_{n\in\N}$ satisfying (\ref{in1}) for every $n\in\N$. It
remains to observe that $B=\{x_n:n\in\N\}\subseteq A$, $B$ is dense
in $X$ since it meets each $U_n$ and $B$ is $p$-independent.
\end{proof}

\subsection{Proof of the implications
(\ref{mama}.1)$\Longrightarrow$(\ref{mama}.2) and
(\ref{mama}.2)$\Longrightarrow$(\ref{mama}.3)}

Assume that a separable infinite dimensional Fr\'echet space $X$
possesses a continuous norm $p$ and that $A,B\in \Sigma(X)$. By
Lemma~\ref{l22}, there is a Banach disk $D$ in $X$ such that both
$A$ and $B$ are dense subsets of the Banach space $X_D$. By
Theorem~\ref{male}, there exists $J\in GL(X)$ such that $J(A)=B$.
Thus $GL(X)$ acts transitively on $\Sigma(X)$. Since $X$ is
separable and metrizable, $\Sigma(X)$ is non-empty. Hence $X$ is a
G-space, which proves the implication
(\ref{mama}.1)$\Longrightarrow$(\ref{mama}.2). Since every separable
infinite dimensional Fr\'echet space supports a hypercyclic operator
\cite{bope}, Lemma~\ref{nle} provides the implication
$(\ref{mama}.2)\Longrightarrow(\ref{mama}.3)$.

\subsection{Proof of the implication
(\ref{mama}.3)$\Longrightarrow$(\ref{mama}.1)}

Let $X$ be a separable Fr\'echet space possessing no continuous
norm. The implication (\ref{mama}.3)$\Longrightarrow$(\ref{mama}.1)
will be verified if we show that there exists $A\in\Sigma(X)$, which
is not an orbit of a continuous linear operator. If $X$ is
isomorphic to $\omega$, the job is already done by Bonet, Frerick,
Peris and Wengenroth \cite[Proposition~3.3]{fre}. It remains to
consider the case of $X$ non-isomorphic to $\omega$. Since $X$ is a
Fr\'echet space possessing no continuous norm and non-isomorphic to
$\omega$, the topology of $X$ can be defined by an increasing
sequence $\{p_n\}_{n\in\N}$ of seminorms such that $p_1$ is
non-trivial and $\ker p_n/\ker p_{n+1}\neq \{0\}$ for each $n\in\N$.
By Lemma~\ref{lile}, there is a dense in $X$ countable
$p_1$-independent set $B$. Since $\ker p_n/\ker p_{n+1}\neq \{0\}$
for each $n\in\N$, for each $n\in\N$, we can pick $x_n\in\ker
p_n\setminus\ker p_{n+1}$. Let $C=\{x_n:n\in\N\}$ and $A=B\cup C$.
Obviously $A$ is a countable subset of $X$. Since $B$ is dense in
$X$ and $B\subseteq A$, $A$ is dense in $X$. Finally, the
$p_1$-independence of $B$ and the inclusions $x_n\in\ker
p_n\setminus\ker p_{n+1}$ imply that $A$ is linearly independent.
Thus $A\in \Sigma(X)$. It suffices to verify that $A$ is not an
orbit. Assume the contrary. Then there are $T\in L(X)$ and $x\in X$
such that $A=O(T,x)$. Let $M=\{n\in\Z_+:T^nx\in C,\ T^{n+1}x\in
B\}$. Since $B$ does not meet $\ker p_1$, $p_1(T^{n+1}x)>0$ for
every $n\in M$. Thus we can consider the (finite or countable)
series $S=\sum\limits_{n\in M}\frac{T^nx}{p_1(T^{n+1}x)}$. Since
$T^nx$ for $n\in M$ are pairwise distinct elements of $C$ and every
$p_k$ vanishes on all but finitely many elements of $C$, the series
$S$ converges absolutely in $X$. Since $T:X\to X$ is a continuous
linear operator and every continuous linear operator on a locally
convex space maps an absolutely convergent series to an absolutely
convergent series, the series $T(S)=\sum\limits_{n\in
M}\frac{T^{n+1}x}{p_1(T^{n+1}x)}$ is also absolutely convergent.
Hence the application of $p_1$ to the terms of $T(S)$ gives a
convergent series of non-negative numbers. But the latter series is
$\sum\limits_{n\in
M}\frac{p_1(T^{n+1}x)}{p_1(T^{n+1}x)}=\sum\limits_{n\in M} 1$. Its
convergence is equivalent to the finiteness of $M$. Thus $M$ is
finite. Let $m=\max(M)$ if $M\neq\varnothing$ and $m=0$ if
$M=\varnothing$. Since $C\subset O(T,x)$ and $C$ is infinite, there
is $k\in\Z_+$ such that $k>m$ and $T^kx\in C$. Since
$M\cap\{j\in\Z_+:j\geq k\}=\varnothing$, from the definition of $M$
it follows that $T^jx\in C$ for every $j\geq k$. Hence $T^jx\in C$
for all but finitely many $j$. It follows that $B=O(T,x)\setminus C$
is finite, which is a contradiction. This contradiction shows that
$A$ is not an orbit and completes the proof of the implication
(\ref{mama}.3)$\Longrightarrow$(\ref{mama}.1) and that of
Theorem~\ref{mama}.

\section{Proof of Theorem~\ref{mamama} \label{s5}}

\begin{lemma}\label{bububu} Let $p$ be a continuous seminorm on a
locally convex space $X$ and $E$ be a countably dimensional subspace
of $X$ such that $E\cap \ker p=\{0\}$. Then there exist a Hamel
basis $\{u_n\}_{n\in\N}$ in $E$ and a sequence $\{f_n\}_{n\in\N}$ in
$X'_p$ such that $f_n(u_m)=\delta_{n,m}$ for every $m,n\in\N$.
\end{lemma}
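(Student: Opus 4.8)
The plan is to build the Hamel basis $\{u_n\}$ and the biorthogonal functionals $\{f_n\}$ simultaneously by a back-and-forth induction, producing the functionals via the Hahn--Banach theorem. The hypothesis $E\cap\ker p=\{0\}$ enters precisely in that it makes $p$ restrict to a norm on $E$; consequently, on every finite-dimensional subspace $F\subseteq E$ all norms are equivalent, so each linear functional on $F$ is bounded by some multiple $Cp|_F$ and hence, by Hahn--Banach, extends to a functional on $X$ dominated by $Cp$. Since $p$ is continuous, such an extension is continuous and has $p^*\leq C<\infty$, so it lies in $X'_p$.

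First I would fix an arbitrary Hamel basis $\{v_n\}_{n\in\N}$ of $E$ (countably infinite, since $E$ is countably dimensional) and then construct $u_n\in E$ and $f_n\in X'_p$ inductively so that, after step $n$, the vectors $u_1,\dots,u_n$ are linearly independent, $f_i(u_j)=\delta_{i,j}$ for $1\leq i,j\leq n$, and $v_1,\dots,v_n\in\spann\{u_1,\dots,u_n\}$. At step $n$, let $k_n$ be the least index with $v_{k_n}\notin\spann\{u_1,\dots,u_{n-1}\}$ (it exists because that span is a proper finite-dimensional subspace of the infinite-dimensional $E$), and set
$$
u_n=v_{k_n}-\sum_{i=1}^{n-1}f_i(v_{k_n})\,u_i .
$$
Then $u_n\neq 0$ (otherwise $v_{k_n}\in\spann\{u_1,\dots,u_{n-1}\}$), we have $f_i(u_n)=0$ for $i<n$, and $\spann\{u_1,\dots,u_n\}=\spann\{u_1,\dots,u_{n-1},v_{k_n}\}$. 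Since $p$ is a norm on the finite-dimensional space $\spann\{u_1,\dots,u_n\}$, the linear functional sending $u_j$ to $\delta_{n,j}$ is bounded there by some $Cp$; extending it by Hahn--Banach as above yields $f_n\in X'_p$ with $f_n(u_j)=\delta_{n,j}$ for $j\leq n$.

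Finally I would verify the two conclusions. Full biorthogonality $f_n(u_m)=\delta_{n,m}$ holds because the case $m\leq n$ is built into the construction of $f_n$ while the case $m>n$ is built into the construction of $u_m$ (corrected to annihilate $f_1,\dots,f_{m-1}$). Linear independence of $\{u_n\}$ is immediate from biorthogonality. For spanning, one notes that $v_1,\dots,v_{k_n}\in\spann\{u_1,\dots,u_n\}$ at every step, which forces $k_{n+1}>k_n$, hence $k_n\geq n\to\infty$, and therefore every $v_m$ eventually lies in $\spann\{u_1,\dots,u_n\}$; thus $\{u_n\}$ spans $E$ and is a Hamel basis.

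The genuinely delicate point — though it becomes routine once isolated — is that each $f_n$ must simultaneously be biorthogonal to the $u$'s \emph{and} belong to $X'_p$; both are obtained from a single Hahn--Banach extension, and this is exactly the step that cannot be carried out without $E\cap\ker p=\{0\}$, since that is what guarantees $p^*(f_n)<\infty$. The remaining bookkeeping (the running index $k_n$ ensuring a true Hamel basis) is straightforward.
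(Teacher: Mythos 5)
Your proof is correct and follows essentially the same route as the paper: a Gram--Schmidt-type induction producing $u_n$ in the span of the original basis vectors corrected to annihilate $f_1,\dots,f_{n-1}$, with each $f_n$ obtained by a Hahn--Banach extension whose $p$-boundedness rests on $E\cap\ker p=\{0\}$ making $p$ a norm on the relevant finite-dimensional span. (Your running index $k_n$ is in fact always equal to $n$ since the $v_j$ form a basis, but this extra bookkeeping is harmless.)
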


\begin{proof} Begin with an arbitrary Hamel basis $\{y_n\}_{n\in\N}$
in $E$. The proof is a variation of the Gramm--Schmidt procedure.
Clearly, it suffices to construct (inductively) two sequences
$\{u_n\}_{n\in\N}$ in $E$ and $\{f_n\}_{n\in\N}$ in $X'_p$ such that
for every $n\in\N$,
\begin{align}
&u_n\in y_n+\spann\{y_j:j<n\};\label{III1}
\\
&f_j(u_k)=\delta_{j,k}\ \ \text{for $j,k\leq n$}.\label{III2}
\end{align}
Indeed, (\ref{III1}) ensures that $\{u_n:n\in\N\}$ is also a Hamel
basis in $E$.

First, we set $u_1=y_1$ and note that $p(u_1)\neq 0$. Then we use
the Hahn--Banach theorem to find $f_1\in X'_p$ such that
$f_1(u_1)=1$. This gives us the basis of induction. Assume now that
$m\geq 2$ and $u_n,f_n$ satisfying (\ref{III1}) and (\ref{III2}) for
$n<m$ are already constructed. Condition (\ref{III2}) for $n<m$
allows us to pick $u_m\in y_m+\spann\{y_n:n<m\}$ such that
$f_j(u_n)=0$ for every $j<n$. Since $y_n$ are linearly independent,
$u_m\in E\setminus\{0\}$. Since $E\cap \ker p=\{0\}$, $p(u_m)\neq
0$. Since $u_1,\dots,u_m$ are linearly independent elements of $E$
and $p(u_m)\neq 0$, the Hahn--Banach theorem allows us to choose
$f_m\in X'_p$ such that $f_m(u_m)=1$ and $f_m(u_j)=0$ for $j<m$.
Clearly, $u_n$ and $f_n$ for $n\leq m$ satisfy (\ref{III1}) and
(\ref{III2}) for $n\leq m$. This completes the inductive procedure
of constructing the sequences $\{u_n\}_{n\in\N}$ in $E$ and
$\{f_n\}_{n\in\N}$ in $X'_p$ satisfying (\ref{III1}) and
(\ref{III2}) for every $n\in\N$.
\end{proof}

The following lemma features as \cite[Theorem~2.2]{bama-book}.

\begin{lemma}\label{bml} Let $X$ be a separable Fr\'echet space and $T\in
L(X)$ be such that the linear span of the union of $T^n(X)\cap \ker
T^n$ for $n\in\N$ is dense in $X$. Then $I+T$ is hypercyclic.
\end{lemma}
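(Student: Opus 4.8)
The plan is to derive hypercyclicity of $I+T$ from Birkhoff's transitivity theorem, which applies because $X$ is a separable Fr\'echet space: it suffices to check that for every pair of non-empty open sets $U,V\subseteq X$ there is $N\in\N$ with $(I+T)^N(U)\cap V\neq\varnothing$; equivalently, one may build a hypercyclic vector directly by a recursion over a countable dense set of targets. In either formulation the starting observation is that the hypothesis makes the linear subspace $Z=\spann\bigcup_{n\in\N}(T^n(X)\cap\ker T^n)$ dense, so all targets and all initial points may be taken in $Z$.

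The engine of the construction is the binomial identity. If $y\in T^n(X)\cap\ker T^n$ and $x\in X$ satisfies $T^nx=y$, then $T^{2n}x=T^ny=0$, so
\[
(I+T)^Nx=\sum_{j=0}^{2n-1}\binom{N}{j}T^jx\qquad(N\geq 2n).
\]
After dividing by $\binom{N}{n}$, the $j=n$ term is exactly $y$, the terms with $j<n$ have coefficients $\binom{N}{j}/\binom{N}{n}\to0$, and the terms with $j>n$ have coefficients tending to $\infty$ but multiply the vectors $T^jx=T^{j-n}y$, each of which again lies in $Z$, now with the strictly smaller index $2n-j$ (indeed $T^{2n-j}(T^{j-n}y)=T^ny=0$ and $T^{j-n}y=T^{2n-j}(T^{2j-2n}x)$). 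Thus $\binom{N}{n}^{-1}x$ is a first approximation to an $(I+T)^N$-preimage of $y$, and the unwanted higher-order terms are removed recursively by subtracting suitably rescaled first approximations of the lower-index vectors $T^{j-n}y$; the recursion terminates because the index strictly decreases. Iterating this over a dense sequence $(y_m)$ of targets in $Z$, at rapidly increasing times $N_1<N_2<\cdots$, one aims to produce the required $U$-to-$V$ transition (or, in the direct version, a vector whose $(I+T)$-orbit is dense).

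The main difficulty — the technical core, in the spirit of Salas' theorem on hypercyclicity of the identity plus a weighted backward shift — is that $I+T$ is expansive in the "nilpotent" directions coming from $Z$: for $0\neq z\in Z$ the vectors $(I+T)^Nz$ grow polynomially in $N$. Consequently, in the recursion the image at time $N_{m+1}$ of a block already placed at time $N_m$ is large rather than negligible, and both the higher-order tail left by the $\binom{N}{n}^{-1}$-normalization and the corrections used to cancel it are large as well. The proof must therefore carry precise quantitative bookkeeping — measuring each block and each residual error against a prescribed summable scale, and absorbing the residual error of earlier steps into the target of later steps — so that all these contributions, once amplified by the relevant binomial factors, still add up to something controllable. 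Once this bookkeeping is carried through, the density of $Z$ together with transitivity yields a hypercyclic vector for $I+T$, which is what the lemma asserts.
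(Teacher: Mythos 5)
The first thing to note is that the paper itself does not prove this lemma: it is quoted from Bayart and Matheron's book (the paper cites it as Theorem~2.2 of \cite{bama-book}). So there is no in-paper argument to compare with, and a self-contained proof has to reprove that theorem in full. Your text does not do this. You correctly identify where the difficulty sits --- $(I+T)^N$ grows polynomially on the vectors of $Z$, so the standard Hypercyclicity Criterion is unavailable and a Salas-type quantitative construction is needed --- but the decisive step is precisely the ``precise quantitative bookkeeping'' that you announce and then omit (``Once this bookkeeping is carried through\dots''). That bookkeeping \emph{is} the theorem; what you have written is a plan, not a proof.

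Worse, the one concrete mechanism you do propose fails. Take $n=2$: $y=T^2x\in T^2(X)\cap\ker T^2$, so $T^4x=0$. Your first approximation is $\binom{N}{2}^{-1}x$, and $(I+T)^N\binom{N}{2}^{-1}x=y+\frac{N-2}{3}\,Ty+r_N$ with $r_N\to0$. The unwanted block $\frac{N-2}{3}Ty$ has index $1$, and the $T$-preimage of $Ty$ prescribed by your own formula is $y$; so your rescaled correction is $\frac{N-2}{3}\cdot\frac1N y$, which is \emph{not} small (it tends to $\frac13y$), and $(I+T)^N\bigl(\frac{N-2}{3N}y\bigr)=\frac{N-2}{3N}y+\frac{N-2}{3}Ty$. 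Subtracting it cancels the tail but re-introduces an order-one error \emph{at the original index}: the coefficient of $y$ drops from $1$ to about $\frac23$. Hence the claim that ``the recursion terminates because the index strictly decreases'' is false --- each correction of a lower-index term feeds an order-one error back at the top index --- and the output of the scheme is neither close to $0$ nor an approximate $(I+T)^N$-preimage of $y$. Small approximate preimages do exist here, e.g.\ $z_N=\frac{6}{N^2}x-\frac{2(N-1)}{N^2}Tx+\frac1N T^2x-\frac1N T^3x$ satisfies $z_N\to0$ and $(I+T)^Nz_N=y+\frac{6}{N^2}x+\frac{4N+2}{N^2}Tx\to y$; but note the tuned coefficient on $Tx$, a vector your greedy recursion never touches: one must solve the whole triangular system globally and prove the solution can be taken small for every $n$, and then a second, harder layer of the argument must absorb the polynomially growing source term $(I+T)^Nu$, uniformly over the seminorms of the Fr\'echet space. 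None of this is carried out in your proposal, so it has a genuine gap at its core; either cite the result as the paper does, or reproduce the complete construction of Grivaux/Bayart--Matheron.
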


\begin{lemma}\label{hype} Let $p$ be a non-trivial continuous
seminorm on a separable locally convex space $X$ for which there
exists a Banach disk $D$ in $X$ such that $X_D$ is a dense subspace
of $X$ and the Banach space $(X_D,p_D)$ is separable. Then there
exists $T\in L(X)$ such that $T$ is hypercyclic and $Tx=x$ for every
$x\in \ker p$.
\end{lemma}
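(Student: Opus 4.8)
The plan is to construct the operator $T$ as a suitable limit (in the style of Theorem~\ref{male}) of finite-rank perturbations, and then verify the hypercyclicity criterion of Lemma~\ref{bml}. First I would note that by Lemma~\ref{lile}-type reasoning (or directly, since $p$ is non-trivial and $X_D$ is dense) we may choose a countably infinite $p$-independent subset $\{e_n\}_{n\in\N}$ of $X_D$ that is also $p_D$-dense in $X_D$; in particular $E=\spann\{e_n:n\in\N\}$ satisfies $E\cap\ker p=\{0\}$, so Lemma~\ref{bububu} supplies a Hamel basis $\{u_n\}_{n\in\N}$ of $E$ and biorthogonal functionals $\{f_n\}_{n\in\N}\subset X'_p$ with $f_n(u_m)=\delta_{n,m}$. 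The idea is that $T$ will act as a weighted backward-shift-type operator on a carefully re-indexed and re-scaled version of this basis, so that each $u_n$ eventually lands in $\ker T^k$ while the orbit structure forces density.

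The key construction I would carry out is this: arrange the vectors $u_n$ into a sequence of finite ``blocks'' $B_1,B_2,\dots$, where block $B_k$ has length $k$, and within each block let $T$ send each basis vector to a small multiple of the previous one and kill the first vector of the block; that is, on block $B_k=\{w_{k,1},\dots,w_{k,k}\}$ set $Tw_{k,1}=0$ and $Tw_{k,j}=\lambda_{k,j}w_{k,j-1}$ for $j\ge 2$, with the $\lambda_{k,j}>0$ chosen tiny. Writing $T$ in the form $Tx=\sum_{k,j\ge2}\lambda_{k,j}f_{k,j}(x)w_{k,j-1}$ where $f_{k,j}$ is the functional biorthogonal to $w_{k,j}$, I would invoke Lemma~\ref{nuc1}: choosing the block entries of the Banach-disk basis $\{e_n\}$ to be summable against the $p^*(f_{k,j})$ (rescale the $e_n$ if necessary, using that $p_D$-density is preserved under rescaling within spans) makes $T$ a well-defined continuous operator on $X$ that maps $X$ into $X_D$. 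Since each $f_{k,j}\in X'_p$ vanishes on $\ker p$, we automatically get $Tx$ depending only on the $p$-data of $x$, hence $T$ is well-defined, and we can arrange $Tx=x$ on $\ker p$ by replacing $T$ with $I+T$ at the end — wait, more precisely we take the final operator to be $I+T$; but then $I+T$ fixes $\ker p$ since $T$ kills it, which is exactly what is wanted for the statement (the statement asks for a hypercyclic $T$ with $Tx=x$ on $\ker p$, so our final object is $I+T$ with $T$ as above).

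To finish, I would check the hypothesis of Lemma~\ref{bml} for the operator $T$ (so that $I+T$ is hypercyclic): for each $k$, the top vector $w_{k,k}$ of block $B_k$ satisfies $T^{k-1}w_{k,k}=\bigl(\prod_{j=2}^k\lambda_{k,j}\bigr)w_{k,1}\ne 0$ and $T^k w_{k,k}=0$, so (a scalar multiple of) $w_{k,1}$ lies in $T^{k-1}(X)\cap\ker T$; more generally every $w_{k,j}$ with $j<k$ lies in $T^{k-j}(X)\cap\ker T^{k-j}$ after noting $T^{k-j}$ annihilates it while it is hit by $T^{k-j}$ from $w_{k,k}$. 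Hence the linear span of $\bigcup_n\bigl(T^n(X)\cap\ker T^n\bigr)$ contains all $w_{k,j}$ with $j<k$, and I would arrange the blocking so that $\{w_{k,1}:k\in\N\}$ (the bottom vectors, which we are free to choose as we like among the dense family $\{e_n\}$) is itself $p_D$-dense in $X_D$, hence dense in $X$ since $X_D$ is dense in $X$; density of that span in $X$ then follows. The main obstacle, and the point needing the most care, is the simultaneous bookkeeping: one must route the $p_D$-dense family $\{e_n\}$ into the ``bottom slots'' $w_{k,1}$ while keeping the summability condition $\sum_{k,j}p^*(f_{k,j})p_D(w_{k,j-1})<\infty$ of Lemma~\ref{nuc1} (this is where the freedom to rescale the $e_n$ and to choose the $\lambda_{k,j}$ very small is spent), and one must check that the biorthogonal functionals $f_{k,j}$ produced by Lemma~\ref{bububu} indeed lie in $X'_p$ with controllable $p^*$ — which they do by construction of that lemma. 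Everything else (continuity of $T$, $I+T$ fixing $\ker p$, the $\ker T^n$ computations) is then a routine consequence of Lemmas~\ref{nuc1}, \ref{bububu} and \ref{bml}.
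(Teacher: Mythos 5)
Your overall strategy (biorthogonal system from Lemma~\ref{bububu}, a small weighted shift built through $X_D$, continuity via Lemma~\ref{nuc1}-type estimates, hypercyclicity of $I+T$ via Lemma~\ref{bml}) is the same as the paper's, but there are two genuine gaps. The first is that you apply Lemma~\ref{bml} to $T$ acting on $X$ itself. Lemma~\ref{bml} is stated for separable \emph{Fr\'echet} spaces, whereas in Lemma~\ref{hype} the space $X$ is only a separable locally convex space (neither metrizability nor completeness is assumed); this is exactly why the hypothesis provides a Banach disk $D$ with $(X_D,p_D)$ separable. The paper's proof arranges $S(X)\subseteq X_D$, restricts $S$ to a continuous operator $S_D$ on the separable Banach space $X_D$, applies Lemma~\ref{bml} there to conclude that $I+S_D$ is hypercyclic on $X_D$, and then transfers hypercyclicity to $I+S$ on $X$ using that $X_D$ is dense in $X$ and carries a finer topology. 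Your $T$ also maps $X$ into $X_D$, so this restriction-and-transfer step would repair the argument, but it is missing and cannot be skipped for general $X$.

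The second gap is the density bookkeeping for the block shift. You want the bottom vectors $w_{k,1}$ to be ``chosen as we like among the dense family $\{e_n\}$'', but Lemma~\ref{bububu} does not allow you to keep prescribed vectors: to secure biorthogonal functionals in $X'_p$ it performs a Gramm--Schmidt-type triangular modification, so the basis it returns consists of combinations $u_n\in y_n+\spann\{y_j:j<n\}$ with no smallness control, and in general one cannot attach $p$-dominated biorthogonal functionals to an arbitrary prescribed $p$-independent dense family without altering it. Moreover, in a block of length $k$ one has $w_{k,j}\in\ker T^j$, so $w_{k,j}$ lies in some $T^n(X)\cap\ker T^n$ only when $j\leq k-j$ (your claim for all $j<k$ is off); hence what you need is that a \emph{proper} subfamily of a Hamel basis of $E$ has dense span, and this can fail outright (if the $u_n$ happen to form a minimal system, every proper subfamily spans a non-dense subspace). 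The paper avoids the problem by using a single unbroken backward-shift chain, $Su_1=0$ and $Su_{n+1}=c_nu_n$ with $c_n\neq 0$ and weights $c_n=2^{-n}/(p_D(u_n)p^*(f_{n+1}))$: then $S(E)=E$, so $u_m\in S^m(X_D)\cap\ker S^m$ for every $m$, and the span of the union contains the dense subspace $E$ with no extra arranging (and the weights already give the required summability, so no rescaling of the dense family is needed). Unless you replace your blocks by such a chain, or otherwise prove density of the span of $\bigcup_n\bigl(T^n(X)\cap\ker T^n\bigr)$, the verification of the hypothesis of Lemma~\ref{bml} is not complete.
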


\begin{proof} Since $X_D$ is dense in $X$ and the Banach space
topology on $X_D$ is stronger than the one inherited from $X$, the
restriction of $p$ to $X_D$ is a non-trivial continuous seminorm on
the Banach space $X_D$. By Lemma~\ref{l22}, there is a dense
countable subspace $A$ of the Banach space $X_D$ such that $A$ is
$p$-independent. Let $E=\spann(A)$. Then $E$ is a dense in
$(X_D,p_D)$ and therefore in $X$ countably dimensional subspace of
$X_D$. Since $A$ is $p$-independent, $E\cap\ker p=\{0\}$. By
Lemma~\ref{bububu}, there is a Hamel basis $\{u_n\}_{n\in\N}$ in $E$
and a sequence $\{f_n\}_{n\in\N}$ in $X'_p$ such that
$f_n(u_m)=\delta_{n,m}$ for every $m,n\in\N$.

Consider the linear map $S:X\to X_D$ defined by the formula:
$$
Sx=\sum_{n=1}^\infty
\frac{2^{-n}f_{n+1}(x)}{p_D(u_n)p^*(f_{n+1})}u_n.
$$
The series in the above display converges absolutely in $X_D$ since
$|f_{n+1}(x)|\leq p(x)p^*(f_{n+1})$. Furthermore $p_D(Sx)\leq p(x)$
for every $x\in X$. Hence $S$ is a well-defined continuous linear
map from $X$ to $X_D$. In particular, $S\in L(X)$ and the
restriction $S_D$ of $S$ to $X_D$ is a continuous linear operator on
the Banach space $X_D$. Moreover, analyzing the action of $S$ on
$u_k$, it is easy to see that $S(E)=S_D(E)=E$ and therefore
$E\subseteq S_D^n(X_D)$ for every $n\in\N$. Furthermore, $u_n\in\ker
S_D^n$ for every $n\in\N$. Hence $E\subseteq
\bigcup\limits_{n\in\N}\ker S_D^n$. Since $E$ is dense in $X_D$,
Lemma~\ref{bml} implies that $T_D=I+S_D$ is a hypercyclic operator
on the Banach space $X_D$. Since the topology of $X_D$ is stronger
than the one inherited from $X$ and $X_D$ is dense in $X$, every
hypercyclic vector for $T_D$ is also hypercyclic for $T=I+S\in
L(X)$. Thus $T=I+S$ is hypercyclic. Next, $p$-boundedness of each
$f_k$ implies that each $f_k$ vanishes on $\ker p$. Hence $\ker
p\subseteq \ker S$ and therefore $Tx=x$ for every $x\in\ker p$.
\end{proof}

Now we are ready to prove Theorem~\ref{mamama}. Let $E$ be a
countably dimensional metrizable locally convex space. Denote the
completion of $E$ by the symbol $X$. That is, $X$ is a separable
infinite dimensional Fr\'echet space and $E$ is a dense countably
dimensional subspace of $E$.

{\bf Case 1:} \ $X$ is non-isomorphic to $\omega$. In this case the
topology of $X$ is non-weak and therefore $X$ supports a non-trivial
continuous seminorm $p$. By Lemma~\ref{lile}, there is a countable
dense in $X$ $p$-independent set $B$ such that $B\subseteq E$. A
standard application of Zorn's lemma provides a maximal by inclusion
$p$-independent subset $A$ of $E$ containing $B$. Since $B\subseteq
A$, $A$ is dense in $X$. Since $E$ is countably dimensional, $A$ is
countable ($p$-independence implies linear independence). By
Lemma~\ref{l22}, every separable infinite dimensional Fr\'echet
space contains a Banach disk $K$ such that $X_K$ is a separable
Banach space and $X_K$ is dense in $X$. Now by Lemma~\ref{hype}
there is a hypercyclic $T\in L(X)$ such that $Tx=x$ for every $x\in
\ker p$. Let $u$ be a hypercyclic vector for $T$. First, we shall
verify that $O(T,u)$ is $p$-independent. Assume the contrary. Then
there exists a non-zero polynomial $r$ such that $r(T)u\in\ker p$.
Then for every $n\in\Z_+$, we can write $t^n=r(t)q(t)+v(t)$, where
$q$ and $v$ are polynomials and $\deg v<\deg r=d$. Hence
$T^nu=q(T)r(T)u+v(T)u$. Since $r(T)u\in\ker p$ and $\ker p$ is
invariant for $T$, $q(T)r(T)u\in\ker p$. Hence $O(T,u)\subseteq
L+\ker p$, where $L=\spann\{u,Tu,\dots,T^{d-1}u\}$. Since $L$ is
finite dimensional and $\ker p$ is a closed subspace of $X$ of
infinite codimension, $L+\ker p$ is a proper closed subspace of $X$.
We have obtained a contradiction with the density of $O(T,u)$. Thus
the countable dense in $X$ set $O(T,u)$ is $p$-independent. Recall
that $A$ is also countable, dense in $X$ and $p$-independent. By
Lemma~\ref{l22}, there is a Banach disk $D$ in $X$ such that both
$A$ and $O(T,u)$ are dense subsets of the Banach space $(X_D,p_D)$.
By Theorem~\ref{male}, there exists $J\in GL(X)$ such that
$J(O(T,u))=A$ and $Jx=x$ for every $x\in\ker p$. Let $S=JTJ^{-1}$.
Exactly as in the proof of Lemma~\ref{nle}, one easily sees that
$Ju$ is a hypercyclic vector for $S$ and that $O(S,Ju)=A$. In
particular, $Ju\in A\subset E$. It remains to verify that
$S(E)\subseteq E$. Indeed, in this case the restriction of $S$ to
$E$ provides a continuous linear operator on $E$ with $Ju$ being its
hypercyclic vector.

Let $x\in E$. It suffices to show that $Sx\in E$. The maximality of
$A$ implies that we can write $x=y+z$, where $y\in \spann(A)$ and
$z\in\ker p$. Since $A\subset E$, $y\in E$ and therefore $z=x-y\in
E$. Since $A=O(S,Ju)$, $S(A)\subseteq A$. Hence
$S(\spann(A))\subseteq \spann(A)\subseteq E$. It follows that $Sy\in
E$. Since $Tv=Jv=v$ for $v\in\ker p$, we have $Sv=v$ for $v\in \ker
p$ and therefore $Sz=z$. Thus $Sx=Sy+Sz=Sy+z\in E$, as required.
This completes the proof for Case~1.

{\bf Case 2:} \ $X$ is isomorphic to $\omega$. It is well-known
(see, for instance, \cite{bope}) that $\omega$ supports a
hypercyclic operator. Actually, it is easy to see that the shift
$S\in L(\K^\N)$, $(Sx)_n=x_{n+1}$ is hypercyclic. Thus, we can take
$S\in L(X)$ and $x\in X$ such that $x$ is a hypercyclic vector for
$S$ and let $F=\spann(O(S,x))$. Then $F$ is another dense countably
dimensional subspace of $X$. Obviously $F$ supports a hypercyclic
operator (the restriction of $S$ to $F$). By Theorem~\ref{omeg}, $E$
and $F$ are isomorphic. Hence $E$ supports a hypercyclic operator.
The proof of Theorem~\ref{mamama} is now complete.

\section{Open problems and remarks \label{s6}}

Note that the locally convex direct sum $\phi$ of countably many
copies of the one-dimensional space $\K$ is a complete countably
dimensional locally convex space. A number of authors, see, for
instance, \cite{bope}, have observed that $\phi$ supports no
hypercyclic operators.

\begin{question}\label{q1} Characterize countably dimensional
locally convex spaces supporting a hypercyclic operator.
\end{question}

The following is an interesting special case of the above problem.

\begin{question}\label{q2} Are there any complete countably dimensional
locally convex spaces supporting a hypercyclic operator?
\end{question}

The following question also seems to be interesting.

\begin{question}\label{q3} Characterize complete G-spaces.
Characterize complete G-spaces supporting a hypercyclic operator.
\end{question}

Note that although $\omega$ is not a G-space, Theorem~\ref{omeg}
shows that $GL(\omega)$ acts transitively on the set of dense
countably dimensional subspaces of $\omega$.

\begin{question}\label{q4} Characterize complete locally convex
spaces $X$ with the property that $GL(X)$ acts transitively on the
set of dense countably dimensional subspaces of $X$.
\end{question}

%

\small\rm

\vskip1truecm

\scshape

\noindent Andre Schenke \ {\rm and} \ Stanislav Shkarin

\noindent Queen's University Belfast

\noindent Pure Mathematics Research Centre

\noindent University road, Belfast, BT7 1NN, UK

\noindent E-mail address: \qquad {\tt s.shkarin@qub.ac.uk,\qquad
aschenke01@qub.ac.uk}

\end{document}